





\documentclass[11pt,a4paper,subeqn,reqno]{article}



\usepackage{graphicx}
\usepackage{amssymb}
\usepackage{epstopdf}

\usepackage{latexsym}

\usepackage{amsmath,amsthm}

\usepackage[mathscr]{eucal}
\usepackage[all]{xy}
\usepackage{mathrsfs}
\usepackage{authblk}

\usepackage{amsfonts}
\usepackage{amsmath}
\usepackage[hyperpageref]{backref}
\usepackage[dvipsnames,usenames]{color}
\usepackage{color}
\usepackage{shuffle}
\usepackage{colortbl}
\usepackage{makeidx}
\usepackage[all]{xy}
\usepackage{shuffle}

\allowdisplaybreaks
\linespread{1.2}

\usepackage[text={152mm,225mm},centering]{geometry}

\usepackage{mathtools}
\usepackage{dsfont}
\usepackage{url}
\usepackage{enumerate}

\usepackage[hyperindex=true, colorlinks=false]{hyperref}


\newtheorem{theorem}{Theorem}


\newtheorem{lemma}{Lemma}[section]

\newtheorem{corollary}[lemma]{Corollary}

\newtheorem*{THM_Dyn}{Theorem~\ref{Dynthm2}'}
\newtheorem*{THM_Kim}{Theorem~\ref{gotofull}'}

\theoremstyle{definition}

\newtheorem{example}[lemma]{Example}
\newtheorem{definition}[lemma]{Definition}
\newtheorem{definition-lemma}[lemma]{Definition-Lemma}
\newtheorem{definition-theorem}[lemma]{Definition-Theorem}
\newtheorem{assumption}[lemma]{Assumption}
\newtheorem{remark}[lemma]{Remark}



\newcommand{\A}{{\mathscr A}}
\newcommand{\Q}{\mathbb{Q}}





\newcommand{\unA}{1_\A}
\newcommand{\uncA}{1_\cA}
\newcommand{\est}{{\varnothing}}

\newcommand{\om}{\omega}
\newcommand{\Om}{\Omega}
\newcommand{\ii}{^{-1}}
\newcommand{\ti}{\tilde}
\newcommand{\pa}{\partial}

\newcommand{\cA}{{\mathcal{A}}}
\newcommand{\fD}{\mathfrak{D}}
\newcommand{\fN}{\mathcal{N}}
\newcommand{\fM}{\mathcal{M}}
\newcommand{\N}{\mathbb{N}}
\newcommand{\uN}{{\underline{\fN}}}
\newcommand{\UN}{{\underline{\fN}}}
\newcommand{\UM}{{\underline{\fM}}}
\newcommand{\un}{{\underline{n}}}
\newcommand{\ua}{{\underline{a}}}
\newcommand{\ub}{{\underline{b}}}
\newcommand{\uc}{{\underline{c}}}

\newcommand{\uOm}{{\underline{\Om}}}
\newcommand{\uom}{{\underline{\om}}}

\newcommand{\Bun}{B_{[\,\un\,]}}

\newcommand{\sig}{\sigma}
\newcommand{\kk}{\mathbf{k}}
\newcommand{\kUN}{\kk\,\UN}

\newcommand{\begla}{\begin{equation}}
\newcommand{\beglab}[1]{\begin{equation}	\label{#1}}
\newcommand{\edla}{\end{equation}}

\newcommand{\defeq}{\coloneqq}
\newcommand{\col}{\colon\thinspace}               

\newcommand{\ee}{\mathrm{e}}
\newcommand{\ens}{\enspace}
\newcommand{\ie}{{\it i.e.}\ }
\newcommand{\eg}{{\it e.g.}\ }

\newcommand{\idm}{{\mathds 1}}

\newcommand{\na}{\nabla}

\newcommand{\demi}{\frac{1}{2}}
\newcommand{\tiers}{\frac{1}{3}}
\newcommand{\quart}{\frac{1}{4}}

\newcommand{\imp}{\ens \Rightarrow \ens}
\newcommand{\IMP}{\; \Rightarrow \;}

\newcommand{\lhs}{{left-hand side}}
\newcommand{\rhs}{{right-hand side}}

\newcommand{\ad}{\operatorname{ad}}
\newcommand{\Ad}{\operatorname{Ad}}
\newcommand{\ord}{\operatorname{ord}}
\newcommand{\End}{\operatorname{End}}
\DeclareMathOperator{\LIE}{Lie}

\DeclarePairedDelimiter\abs{\lvert}{\rvert}%

\newcommand{\shh}[3]{\operatorname{sh}\!\big( \begin{smallmatrix}#1,\,
#2\\#3\end{smallmatrix} \big)}
\newcommand{\shabn}{\shh{\ua}{\ub}{\un}}

\newcommand{\SH}[3]{\operatorname{sh}\!\Big( \begin{smallmatrix}#1,\,
#2\\[1.5ex]#3\end{smallmatrix} \Big)}

\newcommand{\SHH}[3]{\operatorname{sh}\!\Big( \begin{matrix}#1,\,
#2\\[-.5ex]#3\end{matrix} \Big)}
\newcommand{\SHabn}{\SHH{\ua}{\ub}{\un}}

\newcommand{\wt}{\widetilde}


\begin{document}
\title{\textbf{The Baker-Campbell-Hausdorff formula \\[1ex] via mould calculus}}

\author[1]{Yong Li\thanks{Partially supported by NSFC (No.11131004, 11271269, 11771303), Email: yongli.math@hotmail.com}}
\author[2]{David Sauzin\thanks{Email: David.Sauzin@obspm.fr}}
\author[1]{Shanzhong Sun\thanks{Partially supported by NSFC (No.11131004, 11271269, 11771303), Email: sunsz@cnu.edu.cn}}

\renewcommand\Affilfont{\small}

\affil[1]{Department of Mathematics, Capital Normal University, Beijing 100048 P. R. China}
\affil[2]{CNRS UMR 8028 IMCCE, 77 av. Denfert-Rochereau, 75014 Paris, France}

\date{}


\maketitle

\begin{abstract}
  The well-known Baker-Campbell-Hausdorff theorem in Lie theory says
  that the logarithm of a noncommutative product $\ee^X \ee^Y$ can be
  expressed in terms of iterated commutators of~$X$ and~$Y$.
%
%
  This paper provides a gentle introduction to \'Ecalle's mould
  calculus and shows how it allows for a short proof of the above
  result, together with the classical Dynkin explicit formula
  \cite{ED} for the logarithm, as well as another formula recently
  obtained by T.~Kimura \cite{K17} for the product of exponentials
  itself.
  We also analyse the relation between the two formulas and indicate
  their mould calculus generalization to a product of more exponentials.
\end{abstract}

\setcounter{tocdepth}{1}\tableofcontents

\newpage


  \section{Introduction}

Let~$\A$ be a noncommutative associative algebra with unit.
In the associative algebra $\A[[t]]$ of all power series in an
indeterminate~$t$ with coefficients in~$\A$, one can take the
exponential of any series without constant term in~$t$ 
and the logarithm of any series with constant term~$\unA$. 
%
%
In this context, the famous Baker-Campbell-Hausdorff theorem (BCH theorem, for short)
can be phrased as
\beglab{BCHthm}
\log( \ee^{tX} \ee^{tY} ) \in \LIE(X,Y)[[t]]
\ens \text{for any $X,Y\in\A$,}
\edla
where $\LIE(X,Y)$ is the Lie subalgebra of~$\A$ generated by~$X$
and~$Y$, \ie the smallest subspace which contains~$X$ and~$Y$ and is stable
under commutator (see \eg \cite{BF} and references therein).

In fact,
using the notation $[A,B]$ or $\ad_A B$ for a commutator $AB-BA$, one has
\[
\log(\ee^{tX}\ee^{tY}) =
t(X+Y)+\frac{t^2}{2}[X,Y]+\frac{t^3}{12}([X,[X,Y]]+[Y,[Y,X]])
-\frac{t^4}{24}[Y,[X,[X,Y]]]+\cdots,
\]
where the coefficient of each power of~$t$ can be written in terms of
nested commutators involving~$X$ and~$Y$ only,
%
%
and there is a remarkable explicit formula due to Dynkin \cite{ED}:
\beglab{eqDynkForm}
\log(\ee^{X}\ee^{Y}) = \sum \frac{(-1)^{k-1}}{k}
\frac{t^{\sig}}{\sig}
\frac{[X^{p_1}Y^{q_1}\cdots X^{p_k}Y^{q_k}]}{p_1!q_1!\cdots p_k!q_k!}
\edla
  with summation over all $k\in\N^*$ and
  $(p_1,q_1),\cdots,(p_k,q_k)\in\N\times\N\setminus\{(0,0)\}$,
where $\sig \defeq p_1+q_1+\cdots+p_k+q_k$ and
$[X^{p_1}Y^{q_1}\cdots X^{p_k}Y^{q_k}] \defeq
\ad_X^{p_1}\ad_Y^{q_1}\cdots \ad_X^{p_k}\ad_Y^{q_k-1}Y$ if $q_k\ge1$
and
$\ad_X^{p_1}\ad_Y^{q_1}\cdots \ad_X^{p_k-1}X$ if $q_k=0$ (in which
case $p_k\ge1$).
Of course, the contribution of the terms with $q_k\ge2$, or with $p_k\ge2$
and $q_k=0$, is zero.

Our aim is to revisit the BCH theorem and the Dynkin formula in the
light of \'Ecalle's so-called ``mould calculus''.
We will show how mould calculus allows one to
prove these results with little effort, as well as an interesting formula
which was recently obtained by T.~Kimura \cite{K17} in relation to the BCH theorem and the
Zassenhaus formula and reads
\begin{multline}   \label{fullexpansion}
   \ee^{tX} \ee^{tY} = \unA +
\sum_{r=1}^\infty
\, \sum_{n_1,\ldots,n_r=1}^\infty \,
\frac{1}{n_r(n_r+n_{r-1})\cdots(n_r+\cdots+n_1)}
D_{n_1}\cdots D_{n_r}\\
\text{with $D_n\defeq \frac{t^n}{(n-1)!}\ad_{X}^{n-1}(X+Y)$ for each $n\ge1$.}
\end{multline}

We will also show how formula~\eqref{fullexpansion} and a little knowledge
of mould calculus immediately imply the BCH theorem, and how the
results can be generalized to a product of more than two exponentials.
%
%
It seems hard to prove all these facts using the methods of \cite{K17},
which rely on a lot of explicit combinatorial computations,
whereas almost no computation is needed when using a tiny
part of mould machinery.
In a nutshell, the point is that the rational coefficients
in~\eqref{fullexpansion} make up a ``symmetral mould''---in fact, a
very classical one in mould calculus---and that Dynkin's
formula~\eqref{eqDynkForm} is in essence a typical ``Lie mould expansion''
involving an ``alternal'' mould; we will explain in due time what
``mould expansions'', ``symmetrality'' and ``alternality'' are and how
they relate to the Lie theory.
We will also define a new operation in mould calculus, which gives the
relation between 
the rational coefficients appearing in
formulas~\eqref{eqDynkForm} and~\eqref{fullexpansion}.


Mould calculus was set up by J.~\'Ecalle in the 1980s as part of his
resurgence theory (\cite{E8185}, \cite{E2}).
Originally, \'{E}calle developed resurgence theory as a tool to study
analytic classification problems within dynamical system theory, first
for one-dimensional holomorphic germs, and then for much larger
classes of discrete dynamical systems or vector fields,
allowing him to tackle the Dulac conjecture about the finiteness of
limit cycles of planar analytic vector fields.
It soon turned out that resurgence theory has its own merits
not only in mathematics but also in physics.
For example, quantum resurgence was developed by \'{E}calle himself
(\cite{E84}) and Voros (\cite{V83}) to study the spectrum of
Schr\"{o}dinger operators, and it was continued by Pham and his
collaborators (\eg \cite{DDP93}) as an essential aspect of exact WKB
analysis.
The mathematical side of resurgence theory has evolved steadily
(\cite{S14}). Recently, resurgence theory has been at the forefront in
such diverse topics in mathematical physics as BPS spectrum
(\cite{GMN13}), supersymmetric field theories (\cite{BDSSU} and
references therein), resurgence and quantization as Riemann-Hilbert
correspondence (\cite{Kon17}), topological strings and Gromov-Witten
theory (\cite{CMS}, \cite{SSV16}), to name a few.

Resurgence theory deals with analytic functions which enjoy a certain
property of analytic continuation (``endlessly continuable
functions''),
which form an algebra,
and which typically appear as Borel transforms of certain divergent series.
In his systematic study of the singularities of these functions, their
monodromies and Stokes data, \'{E}calle 
%
discovered an infinite family of derivations acting on them, which
generate a free Lie algebra.
Mould calculus first appeared as a convenient combinatorial tool to
manipulate these derivations.
Later on, \'Ecalle also used mould calculus to study formal classification
problems in dynamical system theory, without any relation to
resurgence theory.
Mould calculus has since been used in various branches of mathematics,
for example
in the theory of multiple zeta values (\cite{EcaMZV}, \cite{Schneps}, \cite{BE17}, \cite{BS}),
in conjugacy problems for formal or analytic differential equations
\cite{M09}, \cite{S09},
%
in combinatorial Hopf algebras related to symmetric functions
\cite{JT},
in conjugacy problems in Lie algebras motivated by classical and
quantum dynamics \cite{PS16},
%
in the study of Rayleigh-Schr\"odinger series \cite{LMP}.

In the present paper, we do not assume any familiarity with mould
calculus on the part of the reader, and we introduce the most basic
ideas about moulds.
The BCH formula can be seen as an application, and we hope that
readers can find other interesting applications in mathematics or physics.


\newpage

The paper is organized as follows.
%
%
\begin{enumerate}[--]
\item Section~\ref{secMldCalc} 
  is a gentle introduction to mould calculus, containing the basic
  definitions and properties that we will require in our
  applications.
\item Section~\ref{secBCHD} 
  gives short proofs of the BCH theorem (Theorem~\ref{Dynthm1}) and
  Dynkin's formula (Theorem~\ref{Dynthm2}) based on mould calculus.
\item Section~\ref{secKimFullExp} 
  gives a short proof of Kimura's formula (Theorem~\ref{gotofull}) via
  mould calculus,
  as well as another derivation of the BCH theorem
  (Corollary~\ref{corKimFullExp}).
\item Section~\ref{secGenArbFact} 
  indicates how to generalize the previous results to the case of a
  product of more factors $\ee^{tX_1}\cdots \ee^{t X_N}$, with
  arbitrary $N\ge2$
(Theorems~\ref{Dynthm2}' and~\ref{gotofull}').
\item Section~\ref{secSigCompos} 
defines a new operation in mould calculus, that we call
$\sig$-composition, which allows us to relate the mould used for
Dynkin's formula and the one used for Kimura's formula.
\end{enumerate}


\section{Mould calculus for pedestrians}
\label{secMldCalc}


Throughout the article we use the notation
\[
\N = \{0,1,2,\ldots\}, \quad
\N^* = \{1,2,3,\ldots\}.
\]
%
In this section, we denote by~$\kk$ a field of characteristic zero (it
will be~$\Q$ in our later applications)
and by~$\fN$ a nonempty set
(in our applications, it will be either a finite set or~$\N^*$).

\subsection{The mould algebra}

Viewing~$\fN$ as an alphabet (the elements of which we call
``letters''), we denote by~$\uN$ the corresponding set of ``words''
(or ``strings''):
\[
\uN\defeq \{\un=n_1\cdots n_r\mid r\in \mathbb{N},\
n_1,\ldots,n_r\in\fN\}.
\]
The concatenation law
$(a_1\cdots a_r, b_1\cdots b_s)\in\uN\times\uN \mapsto
a_1\cdots a_r \, b_1\cdots b_s \in\uN$
yields a monoid structure, with the empty word~$\est$ as unit.

\begin{definition}
A $\kk$-valued mould on~$\fN$ is a function $\uN\to\kk$. The set of
all moulds is denoted by~$\kk^\uN$.
\end{definition}

Given a mould~$M$, it is customary to denote by~$M^\un$ the value it
takes on a word~$\un$.
%
%
%
\emph{Mould multiplication} is defined by the formula
\beglab{eqdefmouldmultiplic}
(M\times N)^{\un}\defeq \sum\limits_{(\ua,\ub) \ \text{such that} \ \un=\ua\,\ub}M^{\ua}N^{\ub}
\quad \text{for $\un\in\uN$,}
\edla
for any two moulds $M,N\in\kk^\uN$.
For instance,
\[ (M\times N)^{n_1n_2}=M^\est
N^{n_1n_2}+M^{n_1}N^{n_2}+M^{n_1n_2}N^\est. \]
It is immediate to check that \emph{$\kk^\uN$ is an associative
$\kk$-algebra}, noncommutative if~$\fN$ has more than one element,
whose unit is the mould~$\idm$ defined by
$\idm^\est = 1$ and $\idm^\un = 0$ for $\un\neq\est$.

We say that a mould~$M$ has order $\ge p$ if $M^\un=0$ for each
word~$\un$ of length $<p$. 
Clearly, if $\ord M\ge p$ and $\ord N\ge q$, then $\ord(M\times N)\ge
p+q$. In particular, if $M^\est=0$, then
$\ord M^{\times k}\ge k$ for each $k\in\N^*$, hence the moulds
\beglab{eqdefmouldexplog}
\ee^M \defeq \sum_{k\in\N} \frac{1}{k!} M^{\times k}
\ens\text{and}\ens
\log(\idm+M) \defeq \sum_{k\in\N^*} \frac{(-1)^{k-1}}{k} M^{\times k}
\edla
are well-defined (because, for each $\un\in\uN$, only finitely many
terms contribute to $(\ee^M)^\un$ or $(\log(\idm+M))^\un$).
We thus get mutually inverse bijections 
\[
\{\, M\in\kk^\uN \mid M^\est=0 \,\}
\quad \overset{\exp}{\underset{\log}{\rightleftarrows}} \quad
\{\, M\in\kk^\uN \mid M^\est=1 \,\}.
\]


\subsection{Comoulds and mould expansions}

Moulds are meant to provide the coefficients of certain multi-indexed
expansions in an associative algebra~$\cA$.
To deal with infinite expansions, we require this~$\cA$ to
be a complete filtered associative algebra, \ie there is an order
function $\ord \col \cA \to \N\cup\{\infty\}$ compatible with sum and
product,\footnote{%
We assume $\ord(A+B) \ge \min\{\ord A,\ord B\}$ and
$\ord(AB)\ge\ord A + \ord B$ for any $A,B\in\cA$,
and $\ord A = \infty$ iff $A=0$.
}
such that every family $(A_i)_{i\in I}$ of~$\cA$ is formally
summable provided, for each $p\in\N$,
all the $A_i$'s have order $\ge p$ except finitely many of them.
See \cite{S09} or \cite{PS16} for the details.
For the present paper, the reader may think of
\[ \cA = \A[[t]] \]
with the order function relative to powers of~$t$,
where~$\A$ is an associative algebra as in the introduction.

\begin{assumption}   \label{assumpBn}
  We suppose that we are given a family $(B_n)_{n\in\fN}$ in~$\cA$
  such that all the $B_n$'s have order $\ge1$ and, for each $p\in\N$,
  only finitely many of them are not of order $\ge p$.
\end{assumption}

\begin{definition}
We call \emph{associative comould generated by $(B_n)_{n\in\fN}$} the
family $(B_\un)_{\un\in\uN}$ defined by
$B_\est \defeq \uncA$ and
\[
B_{n_1\cdots n_r} \defeq B_{n_1} \cdots B_{n_r}
\quad \text{for all $r\ge1$ and $n_1,\ldots, n_r \in \fN$.}
\]
\end{definition}

\begin{lemma}    \label{lemMorphsmAssAlg}
The formula
\beglab{eqdefmldexp}
M \in \kk^\uN \mapsto
M B \defeq \sum_{\un\in\uN} M^\un B_\un \in \cA
\edla
defines a morphism of associative algebras.
Moreover,
\beglab{eqCompatExpLog}
M^\est = 0 \imp (\ee^M)B = \ee^{M B},
\qquad
M^\est = 1 \imp (\log M)B = \log(M B).
\edla
\end{lemma}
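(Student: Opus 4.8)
The plan is to verify directly from the definitions that $M \mapsto MB$ respects the two algebra structures, and then deduce the exponential/logarithm compatibility formally. First I would check linearity, which is immediate: the formula $MB = \sum_{\un} M^\un B_\un$ is patently $\kk$-linear in~$M$, and the sum is a legitimate element of~$\cA$ because Assumption~\ref{assumpBn} guarantees that for every $p\in\N$ only finitely many words~$\un$ have $\ord B_\un < p$ (a word contributes order $\ge \sum \ord B_{n_i} \ge r$, and among the finitely many letters $n$ with $\ord B_n < p$ only finitely many words of bounded length can be formed), so the family $(M^\un B_\un)_\un$ is formally summable.

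The heart of the matter is multiplicativity: $(M\times N)B = (MB)(NB)$. Here I would expand the right-hand side using the (formally summable) double family
\[
(MB)(NB) = \Big(\sum_{\ua} M^\ua B_\ua\Big)\Big(\sum_{\ub} N^\ub B_\ub\Big)
= \sum_{\ua,\ub} M^\ua N^\ub \, B_\ua B_\ub,
\]
then use the defining property of the associative comould, namely $B_\ua B_\ub = B_{\ua\ub}$ (concatenation of words corresponds to product in~$\cA$, which is just associativity of the product of the $B_n$'s, with the empty-word case handled by $B_\est = \uncA$). Regrouping the sum according to the value $\un = \ua\,\ub$ and using~\eqref{eqdefmouldmultiplic} gives
\[
\sum_{\un} \Big(\sum_{\ua\,\ub = \un} M^\ua N^\ub\Big) B_\un = \sum_{\un} (M\times N)^\un B_\un = (M\times N)B.
\]
The only subtlety is justifying the reindexing of the double sum, but this is exactly the kind of manipulation permitted by the formal summability axioms of a complete filtered algebra (each word $\un$ receives only finitely many contributions, one per factorization $\un = \ua\,\ub$, and the grouping respects the order filtration); I would invoke \cite{S09} or \cite{PS16} for the bookkeeping. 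Finally, $\idm B = \sum_\un \idm^\un B_\un = \idm^\est B_\est = \uncA$, so $\idm$ maps to the unit and we have a morphism of associative algebras.

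For~\eqref{eqCompatExpLog}: since $\exp$ and $\log$ on moulds (resp.\ on $\cA$) are defined by the \emph{same} formal power series in the \emph{same} associative multiplication, and we have just shown $M\mapsto MB$ is an algebra morphism, the compatibility is automatic provided the morphism also commutes with the relevant infinite sums. Concretely, if $M^\est = 0$ then $\ord M^{\times k} \ge k$, so $(M^{\times k})B$ has order $\ge k$ in~$\cA$; hence $\sum_k \frac{1}{k!}(M^{\times k})B$ is formally summable and equals $\big(\sum_k \frac{1}{k!} M^{\times k}\big)B = (\ee^M)B$ by linearity and continuity, while on the other hand $(M^{\times k})B = (MB)^{\times k}$ by multiplicativity, and $MB$ has order $\ge 1$, so $\sum_k \frac{1}{k!}(MB)^{\times k} = \ee^{MB}$; the two sides agree. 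The argument for $\log$ is identical with $M$ replaced by $M - \idm$ (using $M^\est = 1$). The main obstacle, such as it is, is purely one of rigor rather than idea: carefully justifying that the morphism $M\mapsto MB$ interchanges with the infinite sums defining $\exp$, $\log$ and with the regrouping in the multiplicativity proof — all of which reduces to the order estimates $\ord(M^{\times k})B \ge k$ and $\ord B_\un \ge \mathrm{length}(\un)$ together with Assumption~\ref{assumpBn}.
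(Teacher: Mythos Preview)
Your proof is correct and follows essentially the same approach as the paper: formal summability from Assumption~\ref{assumpBn}, multiplicativity from $B_{\ua\,\ub}=B_\ua B_\ub$ and the definition~\eqref{eqdefmouldmultiplic}, and then~\eqref{eqCompatExpLog} as a formal consequence of $M^{\times k}B=(MB)^k$. The paper's version is simply more terse, omitting the explicit unit check and the summability bookkeeping that you spell out.
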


\begin{proof}
Observe that the family $(M^\un B_\un)_{\un\in\uN}$ is formally
summable in~$\cA$ thanks to our assumption on the $B_n$'s.
The property $B_{\ua\,\ub}=B_\ua B_\ub$ for all $\ua,\ub\in\uN$
entails
\beglab{eqMorphism}
(M\times N) B = (M B) (N B),
\edla
whence $M^{\times k} B = (M B)^k$ for all $k\in\N$,
and~\eqref{eqCompatExpLog} follows.
\end{proof}

It is the \rhs\ in~\eqref{eqdefmldexp} that is called a \emph{mould
expansion}.


\begin{example}   \label{exaBCHOm}
Suppose we are given $X,Y \in \A$, an associative algebra.
Take $\kk = \Q$, $\fN = \Om \defeq \{x,y\}$, a two-letter alphabet,
and $\cA = \A[[t]]$.
We then consider the associative comould generated by
\beglab{eqdeffamB}
B_x \defeq t X, \qquad B_y \defeq t Y.
\edla
Trivially, $t X = I_x B$ and $t Y = I_y B$, where $I_x, I_y \in
\Q^\uOm$ are defined by
\[
I_x^{\uom}\defeq \left\{ \begin{split}
    1\ens &\text{if $\uom$ is the one-letter word $x$}\\
    0\ens &\text{else,}
  \end{split} \right.
\qquad
I_y^{\uom}\defeq \left\{ \begin{split}
    1\ens &\text{if $\uom$ is the one-letter word $y$}\\
    0\ens &\text{else.}
  \end{split} \right.
\]
We thus get $\ee^{t X} = \ee^{I_x} B$, $\ee^{t Y} = \ee^{I_y} B$, and
\beglab{eqMldExpSOm}
\ee^{t X} \ee^{t Y} = S_\Om B
\ens\text{with $S_\Om \defeq \ee^{I_x} \times \ee^{I_y}$,}
\quad
\log( \ee^{t X} \ee^{t Y} ) = T_\Om B
\ens\text{with $T_\Om \defeq \log S_\Om$.}
\edla
By~\eqref{eqdefmouldmultiplic} and~\eqref{eqdefmouldexplog}, we get
\beglab{DynS}
S_\Om^\uom = \left\{ \begin{split}
  \frac{1}{p!q!} \quad &\text{if $\uom$ is of the form $x^py^q$ with $p,q\in\N$}\\
  0 \quad &\text{else,}
\end{split} \right.
\edla
thus the first part of~\eqref{eqMldExpSOm} is just another way of writing
$\ee^{t X} \ee^{t Y} = \sum \frac{t^{p+q}}{p!q!}X^p Y^q$.
\end{example}


In the general case, retaining from the associative algebra structure of~$\cA$ only the
underlying Lie algebra structure, \ie using only commutators (with the
notation $\ad_A B = [A,B]$), one can define another kind of mould expansion:

\begin{definition}
We call \emph{Lie comould generated by $(B_n)_{n\in\fN}$} the
family $(\Bun)_{\un\in\uN}$ of~$\cA$ defined by
$B_{[\est]} \defeq 0$ and
\[
B_{[n_1\cdots n_r]} \defeq
\ad_{B_{n_1}} \cdots \ad_{B_{n_{r-1}}} B_{n_r} =
[B_{n_1},[\cdots[B_{n_{r-1}},B_{n_r}]\cdots]].
\]
We define the \emph{Lie mould expansion} associated with a mould
$M \in \kk^\uN$ by the formula
\beglab{eqdefLiemldexp}
M [B] \defeq \sum_{\un\in\uN\setminus\{\est\}} \frac{1}{r(\un)} M^\un \Bun \in \cA,
\edla
where $r(\un)$ denotes the length of a word~$\un$.
\end{definition}



Division by~$r(\un)$ is just a normalization choice whose convenience will
appear in Section~\ref{secSymAlt}.
In Section~\ref{secBCHD}, we will prove the BCH theorem by showing how
to pass from the second part of~\eqref{eqMldExpSOm} to a Lie mould expansion.

\subsection{Symmetrality and alternality}    \label{secSymAlt}

One can get a morphism property for Lie mould expansions analogous
to~\eqref{eqMorphism} by imposing restrictions to the moulds
that we use: they must be ``alternal''. A tightly related
notion is that of ``symmetral'' mould.
The definition of both notions relies on word shuffling.

Recall that the shuffling of two words
$\ua = \om_1\cdots \om_\ell$ and
$\ub = \om_{\ell+1}\cdots \om_r$ is the set of
all the words~$\un$ which can be obtained by interdigitating the
letters of~$\ua$ and those of~$\ub$ while preserving their internal
order in~$\ua$ or~$\ub$, \ie the words which can be written
$\un = \om_{\tau(1)} \cdots \om_{\tau(r)}$
with a permutation~$\tau$ such that\footnote{%
Indeed, $\tau\ii(i)$ is the position in~$\un$ of~$\om_i$, the $i$-th letter of
$\ua\,\ub$.
}
$\tau\ii(1) < \cdots < \tau\ii(\ell)$
and $\tau\ii(\ell+1) < \cdots < \tau\ii(r)$.
We define the \emph{shuffling coefficient} $\shabn$ to be the number
of such permutations~$\tau$, and we set $\shabn\defeq0$ whenever~$\un$
does not belong to the shuffling of~$\ua$ and~$\ub$.
For instance, if $n,m,p,q$ are four distinct elements of~$\fN$,
\[
\SH{nmp}{mq}{nmqpm} = 0, \qquad
\SH{nmp}{mq}{mnqmp} = 1, \qquad
\SH{nmp}{mq}{nmmqp} = 2.
\]
We also define, for arbitrary words $\un$ and~$\ua$,
$\shh{\ua}{\est}{\un} = \shh{\est}{\ua}{\un} = 1$ if $\ua=\un$, $0$ else.

%

\begin{definition}
A mould $M\in\kk^\uN$ is said to be \emph{alternal} if $M^{\est}=0$ and
\beglab{eqdefalt}
\sum_{\un\in\uN} \SHabn M^{\un}=0
\quad \text{for any two nonempty words $\ua$, $\ub$.}
\edla
A mould $M\in\kk^\uN$ is said to be \emph{symmetral} if $M^{\est}=1$ and
\beglab{eqdefSymal}
\sum_{\un\in\uN} \SHabn M^{\un} = M^\ua M^\ub
\quad \text{for any two words $\ua$, $\ub$.}
\edla
\end{definition}

\begin{example}   \label{exaIxIyE}
It is obvious that any mould~$M$ whose support is contained in the set
of one-letter words (\ie $r(\un)\neq1 \Rightarrow M^\un=0$) is alternal.
For instance, the moulds~$I_x$ and~$I_y$ of Example~\ref{exaBCHOm} are alternal.
An elementary example of symmetral mould is~$E$ defined by
$E^\un\defeq \frac{1}{r(\un)!}$.
Indeed, since the total number of words obtained
by shuffling of any $\ua,\ub\in\UN$ (counted with multiplicity) is $\binom{r(\ua\,\ub)}{r(\ua)}$,
\[
\sum_{\un\in\uN} \SHabn E^{\un} =
\frac{r(\ua\,\ub)!}{r(\ua)!r(\ub)!}\cdot\frac{1}{r(\ua\,\ub)!}=E^{\ua}E^{\ub}.
\]
We shall see later that the moulds
$\ee^{I_x}$, $\ee^{I_y}$ and~$S_\Om$
involved in~\eqref{eqMldExpSOm} are symmetral,
and that $T_\Om$ is alternal.
\end{example}


In this paper,\footnote{%
In \'Ecalle's work, the initial motivation for the definition of alternality
and symmetrality is the situation when $\cA$ is an algebra of
operators (acting on an auxiliary algebra) and each~$B_n$ acts as a derivation:
in that case, the $\Bun$'s satisfy a modified Leibniz rule which
involves the shuffling coefficients, whence it follows that $MB$ is
itself a derivation if~$M$ is an alternal mould, and an algebra
automorphism if~$M$ is symmetral.
Here we do not assume anything of that kind on~$\cA$ and the $B_n$'s
but rather follow the spirit of ``Lie mould calculus'' as advocated
in \cite{PS16}.
}
we are interested in the shuffling coefficients because of the following
classical relation between the Lie comould and the associative comould:
\beglab{eqrelBunBn}
\Bun = \sum_{(\ua,\ub) \in \UN\times\UN} (-1)^{r(\ub)} r(\ua)
\, \SHabn \, B_{\,\wt\ub \, \ua}
\quad \text{for all $\un\in\uN$,}
\edla
where, for an arbitrary word $\ub = b_1\cdots b_s$, we denote
by~$\wt\ub$ the reversed word: $\wt\ub = b_s \cdots b_1$
(we omit the proof---see \cite{vonW}, \cite{CR}, \cite{PS16}).
An immediate and useful consequence is
\begin{lemma}\label{assliecomould}
%
%
If $M$ is an alternal mould, then $M[B] = MB$, \ie
%
\[
\sum_{\un\in\uN\setminus\{\est\}} \frac{1}{r(\un)} M^\un \Bun
= \sum_{\un\in\uN} M^\un B_\un.
\]
%
%
\end{lemma}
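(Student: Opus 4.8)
The plan is to substitute the key relation~\eqref{eqrelBunBn} between the Lie comould and the associative comould into the definition~\eqref{eqdefLiemldexp} of the Lie mould expansion, interchange the (formally summable) sums, and then recognize the alternality condition~\eqref{eqdefalt} of~$M$ as exactly what makes the off-diagonal contributions collapse.

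More precisely, first I would write
\[
M[B] = \sum_{\un\neq\est} \frac{1}{r(\un)} M^\un \Bun
= \sum_{\un\neq\est} \frac{1}{r(\un)} M^\un \sum_{(\ua,\ub)} (-1)^{r(\ub)} r(\ua) \, \SHabn \, B_{\,\wt\ub\,\ua},
\]
and then reorganize the double sum by grouping terms according to the value of the word $\uc \defeq \wt\ub\,\ua$ appearing in the associative comould. The coefficient of $B_\uc$ is thus
\[
\sum_{\uc = \wt\ub\,\ua} (-1)^{r(\ub)} \,r(\ua) \Big( \sum_{\un\neq\est} \frac{1}{r(\un)} \SHabn \, M^\un \Big),
\]
where the outer sum runs over the finitely many splittings of $\uc$ as a reversed prefix followed by a suffix. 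For a fixed nonempty word $\uc$, I would argue as follows: when $\ua$ and $\ub$ are both nonempty the inner bracket vanishes by alternality~\eqref{eqdefalt} (the factor $1/r(\un)$ is harmless since $\SHabn$ is supported on words $\un$ of fixed length $r(\ua)+r(\ub)=r(\uc)$, so it pulls out of the sum and the remaining sum is zero); hence only the two extreme splittings survive, namely $(\ua,\ub) = (\uc,\est)$ and $(\ua,\ub)=(\est,\wt\uc)$. For $(\uc,\est)$ one has $\shh{\uc}{\est}{\un}=1$ iff $\un=\uc$, contributing $(-1)^0 r(\uc)\cdot \frac{1}{r(\uc)} M^\uc = M^\uc$; for $(\est,\wt\uc)$ one has $r(\ua)=0$, so that term contributes nothing. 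Therefore the coefficient of $B_\uc$ is exactly $M^\uc$, and summing over all nonempty $\uc$ gives $M[B]=\sum_{\uc\neq\est} M^\uc B_\uc = \sum_{\uc\in\uN} M^\uc B_\uc = MB$, using $M^\est=0$.

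The one point that needs a little care---and which I'd regard as the main (mild) obstacle---is justifying the reindexing of the double sum: I must check that the family $\big(\tfrac{1}{r(\un)} M^\un (-1)^{r(\ub)} r(\ua)\,\SHabn\, B_{\,\wt\ub\,\ua}\big)$, indexed by triples $(\un,\ua,\ub)$, is formally summable in~$\cA$ so that Fubini-type rearrangement is legitimate. This follows from Assumption~\ref{assumpBn}: for each $p$, only finitely many $B_n$ have order~$<p$, hence $B_{\wt\ub\,\ua}$ has order $\ge p$ except for finitely many words $\wt\ub\,\ua$, and for each such word only finitely many triples $(\un,\ua,\ub)$ contribute (those with $\un$ in the shuffle of $\ua$ and $\ub$, of which there are finitely many once $\ua,\ub$ are pinned down by $\wt\ub\,\ua$ and a splitting point). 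Everything else is bookkeeping with the shuffling coefficients, and the division by $r(\un)$ in~\eqref{eqdefLiemldexp} is precisely the normalization that makes the surviving diagonal term come out as $M^\uc$ with coefficient~$1$.
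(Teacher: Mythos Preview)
Your proposal is correct and follows essentially the same approach as the paper: substitute~\eqref{eqrelBunBn} into~\eqref{eqdefLiemldexp}, swap the sums, pull out the constant $1/r(\un)=1/(r(\ua)+r(\ub))$, and use alternality to kill the terms with $\ua,\ub$ both nonempty while the factor $r(\ua)$ kills the $\ua=\est$ term. Your extra grouping by $\uc=\wt\ub\,\ua$ and your justification of formal summability are minor elaborations the paper leaves implicit, but the substance is identical.
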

%

\begin{proof}
Putting together~\eqref{eqdefLiemldexp} and~\eqref{eqrelBunBn}, we get
$M[B] = \sum\limits_{\un\neq\est}\sum\limits_{\ua,\ub}
(-1)^{r(\ub)} \frac{r(\ua)}{r(\un)}
\, \shabn M^\un \, B_{\,\wt\ub \, \ua}$.
Now, $\shabn\neq0 \Rightarrow r(\un) = r(\ua) + r(\ub)$, hence
\[
M[B] = \sum_{r(\ua)+r(\ub)\ge1}
(-1)^{r(\ub)} \tfrac{r(\ua)}{r(\ua)+r(\ub)}
\bigg( \sum_{\un\in\UN} \, \SHabn M^\un \bigg) B_{\,\wt\ub \, \ua}
= \sum_{\ua\neq\est} M^\ua B_\ua = MB
\]
(the internal sum is~$M^\ua$ when $\ub=\est$
and it does not contribute when $\ua$ or $\ub\neq\est$ because of~\eqref{eqdefalt},
nor when $\ua=\est$ because of the factor $r(\ua)$).
\end{proof}


\emph{Any mould expansion associated with an alternal mould thus belongs to
the (closure of the) Lie subalgebra of~$\cA$ generated by the $B_n$'s},
since it can be rewritten as a Lie mould expansion, involving only
commutators of the $B_n$'s.

Lemma~\ref{assliecomould} is related to the classical
Dynkin-Specht-Wever projection lemma in the context of free Lie
algebras (see \eg \cite{CR}).
One should also mention that the concepts of symmetrality and
alternality are related to certain combinatorial Hopf algebras, as
emphasized by F.~Menous in his work on the renormalization theory in
perturbative quantum field theory---see \eg \cite{M09} and footnote~\ref{footHA}.
Hopf-algebraic aspects of mould calculus are also touched upon
in \cite{S09}, \cite{PS16} and \cite{LMP}.

For our applications, we require a last general result from mould
calculus (see \eg \cite{S09} for a proof):


\begin{lemma}\label{altsymmould}
$ $\\[-4.5ex]

\begin{itemize}
\item The product of two symmetral moulds is symmetral.
\item The logarithm of a symmetral mould is alternal.
\item The exponential of an alternal mould is symmetral.
\end{itemize}
\end{lemma}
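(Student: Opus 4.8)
The plan is to prove the three statements of Lemma~\ref{altsymmould} in the stated order, since each one feeds into the next. Throughout, I would work directly from the defining identities~\eqref{eqdefalt} and~\eqref{eqdefSymal}, together with the single structural fact about shuffling that makes everything run: the \emph{co-associativity} of the shuffle, namely that for any words $\ua,\ub,\un$ one has
\beglab{eqshufflecoassoc}
\sum_{\ue\in\uN} \SHH{\ua}{\ub}{\ue}\,\SHH{\ue}{\uc}{\un}
= \sum_{\uf\in\uN} \SHH{\ub}{\uc}{\uf}\,\SHH{\ua}{\uf}{\un},
\edla
both sides counting the ways to interdigitate the three words $\ua,\ub,\uc$ into~$\un$ while preserving internal orders. (I would state this as a preliminary observation; it is the combinatorial heart of the matter, and it is the one place where a small amount of care is needed.)

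First, for the product of two symmetral moulds $M,N$: one computes $\sum_\un \SHabn (M\times N)^\un$ by expanding $(M\times N)^\un = \sum_{\un = \uc\ud} M^\uc N^\ud$ and regrouping. The point is that summing $\SHabn$ over all $\un$ that split as $\uc\ud$, with $\uc,\ud$ fixed, amounts to distributing the letters of $\ua$ and of $\ub$ between a ``left part'' matching $\uc$ and a ``right part'' matching $\ud$; so one introduces splittings $\ua = \ua_1\ua_2$, $\ub = \ub_1\ub_2$ and uses $\SHH{\ua_1}{\ub_1}{\uc}$, $\SHH{\ua_2}{\ub_2}{\ud}$. Carrying this out and applying symmetrality of $M$ to the $\uc$-sum and of $N$ to the $\ud$-sum, one gets $\sum_{\ua=\ua_1\ua_2}\sum_{\ub=\ub_1\ub_2} M^{\ua_1}M^{\ub_1}N^{\ua_2}N^{\ub_2} = (M\times N)^\ua (M\times N)^\ub$, which is exactly the symmetrality of $M\times N$; and $(M\times N)^\est = M^\est N^\est = 1$.

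For the third item --- exponential of an alternal mould $A$ is symmetral --- I would deduce it from the first. Since $A^\est = 0$, the mould $\ee^A = \sum_{k\ge0} \frac{1}{k!} A^{\times k}$ is well defined with $(\ee^A)^\est = 1$. The clean way is to introduce the ``symmetrality defect'' bilinear form $\Phi_M(\ua,\ub) \defeq \sum_\un \SHabn M^\un - M^\ua M^\ub$ and show that $\Phi$ behaves like a derivation with respect to $\times$: from the computation in the first item, $\Phi_{M\times N}(\ua,\ub)$ expands in terms of $\Phi_M$ and $\Phi_N$ and the (assumed) symmetrality pieces, so that if $\Phi_M = \Phi_N = 0$ then $\Phi_{M\times N}=0$, recovering item one, while more generally one gets a Leibniz-type relation. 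Feeding $A^{\times k}$ into this and using $\Phi_A$-alternal (the $k=1$ case of alternality being precisely $\Phi_A(\ua,\ub)=0$ for $\ua,\ub\neq\est$, together with $A^\est=0$), an induction on $k$ shows $A^{\times k}$ has vanishing symmetrality defect on nonempty words up to the $A^\est=0$ bookkeeping; summing the series with the $\frac1{k!}$ weights reproduces exactly the shuffle identity $\sum_\un\SHabn(\ee^A)^\un = (\ee^A)^\ua(\ee^A)^\ub$. Alternatively, and perhaps more transparently, one can note that for the mould $E$ of Example~\ref{exaIxIyE} one has $\ee^{A}$ related to iterated $\times$-powers whose shuffle behaviour is governed purely by~\eqref{eqshufflecoassoc}; either route works. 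Finally, the second item --- logarithm of a symmetral mould is alternal --- follows by inverting the third: if $S$ is symmetral then $S = \ee^A$ with $A \defeq \log S$ (well defined since $S^\est=1$, $A^\est=0$, by the bijection of Section~2.1), and one must show $A$ is alternal. Here I would again use the derivation property of $\Phi$: writing $S = \ee^A$, the identity $\Phi_S \equiv 0$ together with the Leibniz relation for $\Phi$ under $\times$ forces, by an induction unwinding $\ee^A$ order by order in word-length, that the ``leading'' defect $\Phi_A$ must vanish on all nonempty $\ua,\ub$ --- i.e. $A$ is alternal.

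The main obstacle is bookkeeping rather than ideas: making precise the claim that $\Phi_{(\cdot)}$ is a derivation for $\times$ (equivalently, organizing the triple-shuffle regrouping in item one so that it visibly iterates), and then running the induction on word-length cleanly for items two and three without sign or index errors. Once the co-associativity~\eqref{eqshufflecoassoc} and the derivation property of the defect are in hand, all three statements are short. I would therefore structure the write-up as: (i) establish~\eqref{eqshufflecoassoc}; (ii) prove the Leibniz relation for $\Phi$ under mould multiplication, which immediately gives item one; (iii) deduce items three and two by the exp/log induction.
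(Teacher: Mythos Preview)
The paper does not actually prove Lemma~\ref{altsymmould}; it only cites~\cite{S09}. So your write-up would stand alone, and the question is whether it is internally correct.

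Your argument for the first item is fine: the compatibility of shuffle with deconcatenation gives
\[
\sum_{\un}\SHabn\sum_{\un=\uc\,\ud}M^{\uc}N^{\ud}
=\sum_{\ua=\ua_1\ua_2}\sum_{\ub=\ub_1\ub_2}
\Big(\sum_{\uc}\SHH{\ua_1}{\ub_1}{\uc}M^{\uc}\Big)
\Big(\sum_{\ud}\SHH{\ua_2}{\ub_2}{\ud}N^{\ud}\Big),
\]
and symmetrality of $M$ and $N$ then yields $(M\times N)^{\ua}(M\times N)^{\ub}$.

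There is, however, a genuine error in your plan for items two and three. You define $\Phi_M(\ua,\ub)=\sum_{\un}\shabn M^{\un}-M^{\ua}M^{\ub}$ and then assert that alternality of~$A$ means $\Phi_A(\ua,\ub)=0$ for $\ua,\ub\neq\est$. That is false: alternality says $\sum_{\un}\shabn A^{\un}=0$, hence $\Phi_A(\ua,\ub)=-A^{\ua}A^{\ub}$, which does not vanish in general. Relatedly, $\Phi$ is \emph{not} a derivation for~$\times$: your own item-one computation shows that $\Phi_{M\times N}$ picks up a quadratic cross-term in $\Phi_M,\Phi_N$, so an induction built on a Leibniz rule for~$\Phi$ does not close.

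The repair is to drop the subtracted term and work with $\Psi_M(\ua,\ub)\defeq\sum_{\un}\shabn M^{\un}$ instead. Your item-one identity says precisely that $M\mapsto\Psi_M$ is an algebra homomorphism from $(\kk^{\UN},\times)$ to $(\kk^{\UN\times\UN},\boxtimes)$, where $(P\boxtimes Q)^{(\ua,\ub)}=\sum_{\ua=\ua_1\ua_2,\,\ub=\ub_1\ub_2}P^{(\ua_1,\ub_1)}Q^{(\ua_2,\ub_2)}$. Alternality of~$A$ (with $A^{\est}=0$) is exactly the statement that $\Psi_A^{(\ua,\ub)}=A^{\ua}\idm^{\ub}+\idm^{\ua}A^{\ub}$; the two summands $\boxtimes$-commute, so $\Psi_{\ee^A}=\exp_{\boxtimes}\Psi_A$ factors and equals $(\ee^A)^{\ua}(\ee^A)^{\ub}$, which is symmetrality. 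Conversely, if $S$ is symmetral then $\Psi_S^{(\ua,\ub)}=S^{\ua}S^{\ub}$ is a $\boxtimes$-product of two commuting factors with constant term~$1$, hence $\Psi_{\log S}=\log_{\boxtimes}\Psi_S=(\log S)^{\ua}\idm^{\ub}+\idm^{\ua}(\log S)^{\ub}$, which vanishes for $\ua,\ub\neq\est$; thus $\log S$ is alternal. This is the ``characters versus infinitesimal characters'' picture alluded to in footnote~\ref{footHA}.
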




\begin{example}
  The mould~$I$ defined by
\beglab{eqdefI}
I^\un = \left\{ \begin{split}
1 \quad &\text{if $r(\un)=1$}\\
  0 \quad &\text{else,}
\end{split} \right.
\edla
is alternal (being supported in one-letter words).
The symmetral mould~$E$ of Example~\ref{exaIxIyE} is $\ee^I$.
\end{example}

In fact, the set of all symmetral moulds is a group for mould multiplication,
the set of all alternal moulds is a Lie algebra for mould commutator,
and we get the analogue of~\eqref{eqMorphism} for Lie mould
expansions:
\[
\text{$M$, $N$ alternal} \imp
[M,N][B] = \big[ M[B], N[B] \big].
\]
Let us also mention a manifestation of the antipode of the Hopf algebra related to
moulds:\footnote{%
\label{footHA}
Denote by~$\kUN$ the linear span of the set of words, \ie the
$\kk$-vector space consisting of all formal sums $c = \sum c_\un \, \un$
with finitely many nonzero coefficients $c_\un\in\kk$.
Now, $\kUN$ is a Hopf algebra if we define multiplication by extending
$(\ua,\ub) \mapsto \ua\shuffle\ub \defeq \sum \shabn \un$ by bilinearity,
comultiplication by extending $\un \mapsto \sum\limits_{\un =
  \ua\,\ub} \ua\otimes\ub$ by linearity,
and antipode by extending $n_1\cdots n_r \mapsto (-1)^r n_r\cdots
n_1$ by linearity
(the unit is~$\est$ and the counit is $c\mapsto c_\est$).
The set of moulds can be identified with the set of linear forms on~$\kUN$,
if we identify $M\in\kk^\UN$ with $c \mapsto \sum M^\un c_\un$. 
The associative algebra structure~\eqref{eqdefmouldmultiplic}
of~$\kk^\UN$ is then dual to the coalgebra structure of~$\kUN$,
and alternal moulds appear as infinitesimal characters of~$\kUN$
(linear forms~$M$ such that $M(c\shuffle c')=M(c) c'_\est + c_\est M(c')$)
and symmetral moulds as characters
(linear forms~$M$ such that $M(\est)=1$ and $M(c\shuffle c')=M(c) M(c')$).
}
\[
\text{$M$ alternal} \IMP S(M) = - M, \quad
\text{$M$ symmetral} \IMP S(M) = \text{multiplicative inverse of~$M$,}
\]
where $S(M)^{n_1\cdots n_r} \defeq (-1)^r M^{n_r \cdots n_1}$.

All these facts are mentioned in \'Ecalle's works and can be proved by
Hopf-algebraic techniques or by direct computation.


  \section{The BCH Theorem and Dynkin's formula}   \label{secBCHD}

Let~$\A$ be an associative algebra. We now use mould calculus to prove


\begin{theorem}    \label{Dynthm1}
Suppose $X,Y \in \A$.
  Let $\Psi = \ee^{tX}\ee^{tY}\in\A[[t]]$.
Then
  \[ \log\Psi \in \LIE(X,Y)[[t]],\]
  where $\LIE(X,Y)$ is the Lie subalgebra of~$\A$ generated by~$X$
  and~$Y$.
\end{theorem}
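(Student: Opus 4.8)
The plan is to exploit the mould-expansion machinery set up in Section~\ref{secMldCalc}, working in the alphabet $\Om=\{x,y\}$ with the comould generated by $B_x=tX$, $B_y=tY$ as in Example~\ref{exaBCHOm}. By~\eqref{eqMldExpSOm}, $\log\Psi = T_\Om B$ with $T_\Om = \log S_\Om$ and $S_\Om=\ee^{I_x}\times\ee^{I_y}$. The heart of the argument is to show that $T_\Om$ is an \emph{alternal} mould; once this is established, Lemma~\ref{assliecomould} gives $T_\Om B = T_\Om[B]$, and the latter is a Lie mould expansion, \ie an (a priori infinite) sum of nested commutators of $B_x=tX$ and $B_y=tY$. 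Since $\cA=\A[[t]]$ is a complete filtered algebra and $\LIE(X,Y)[[t]]$ is closed in it for the $t$-adic topology, this formally summable sum of commutators lies in $\LIE(X,Y)[[t]]$, which is exactly the claim.

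To see that $T_\Om$ is alternal, I would argue as follows. The moulds $I_x$ and $I_y$ are supported on one-letter words, hence alternal (Example~\ref{exaIxIyE}); by the third bullet of Lemma~\ref{altsymmould}, $\ee^{I_x}$ and $\ee^{I_y}$ are symmetral; by the first bullet, their product $S_\Om=\ee^{I_x}\times\ee^{I_y}$ is symmetral; and by the second bullet, $T_\Om=\log S_\Om$ is alternal. This chain is precisely the content flagged at the end of Example~\ref{exaIxIyE}, so no new combinatorics is needed here.

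Assembling the pieces: start from $\log\Psi = T_\Om B$, apply alternality of $T_\Om$ together with Lemma~\ref{assliecomould} to rewrite this as $T_\Om[B] = \sum_{\un\neq\est}\tfrac{1}{r(\un)}T_\Om^{\un}B_{[\un]}$, and note that each $B_{[\un]}$ is an iterated commutator of the $B_n$'s, hence an element of $\LIE(X,Y)[[t]]$ (indeed of order $\ge r(\un)$ in~$t$). Formal summability then places the whole sum in the closure of $\LIE(X,Y)[[t]]$; since this subspace is already $t$-adically closed in $\A[[t]]$, we conclude $\log\Psi\in\LIE(X,Y)[[t]]$.

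The only genuinely substantive input is Lemma~\ref{altsymmould} (that $\exp$, $\log$ and products behave well with respect to symmetrality/alternality), whose proof is cited rather than given; granting it, the main obstacle is merely bookkeeping—checking that the rewriting via Lemma~\ref{assliecomould} is legitimate (which it is, because alternality is exactly the hypothesis of that lemma) and that the resulting commutator expansion is a well-defined element of $\A[[t]]$ lying in the Lie subalgebra. I expect the proof in the paper to be essentially this two- or three-line argument, and the explicit Dynkin formula~\eqref{eqDynkForm} to emerge in the next theorem by computing $T_\Om^{\un}$ from $S_\Om$ via~\eqref{DynS} and~\eqref{eqdefmouldexplog}.
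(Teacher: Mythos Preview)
Your proposal is correct and matches the paper's proof essentially line for line: set up $\log\Psi=T_\Om B$ via Example~\ref{exaBCHOm}, use Lemma~\ref{altsymmould} to pass from the alternality of $I_x,I_y$ through the symmetrality of $\ee^{I_x},\ee^{I_y},S_\Om$ to the alternality of $T_\Om$, and then invoke Lemma~\ref{assliecomould} to rewrite $T_\Om B$ as the Lie mould expansion $T_\Om[B]$. The paper's argument is slightly terser (it does not spell out the completeness/closure remark you add), but the logical skeleton is identical.
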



\begin{theorem}[Dynkin, \cite{ED}]    \label{Dynthm2}
In the above situation,
  \begin{equation}\label{Dynpsi}
  \log\Psi = \sum \frac{(-1)^{k-1}}{k} \frac{t^{\sig}}{\sig}
\frac{[X^{p_1}Y^{q_1}\cdots X^{p_k}Y^{q_k}]}{p_1!q_1!\cdots p_k!q_k!}
  \end{equation}
  with summation over all $k\in\N^*$ and
  $(p_1,q_1),\cdots,(p_k,q_k)\in\N\times\N\setminus\{(0,0)\}$,
where $\sig \defeq p_1+q_1+\cdots+p_k+q_k$ and
$[X^{p_1}Y^{q_1}\cdots X^{p_k}Y^{q_k}] \defeq
\ad_X^{p_1}\ad_Y^{q_1}\cdots \ad_X^{p_k}\ad_Y^{q_k-1}Y$ if $q_k\ge1$
and
$\ad_X^{p_1}\ad_Y^{q_1}\cdots \ad_X^{p_k-1}X$ if $q_k=0$.
\end{theorem}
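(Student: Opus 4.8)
The plan is to reduce both theorems to the mould-calculus facts already assembled, so that essentially no commutator bookkeeping is needed. Starting from Example~\ref{exaBCHOm}, we have $\log\Psi = T_\Om B$ with $T_\Om = \log S_\Om = \log(\ee^{I_x}\times\ee^{I_y})$. By Example~\ref{exaIxIyE} the moulds $I_x$ and $I_y$ are alternal (supported on one-letter words), so by Lemma~\ref{altsymmould} their exponentials $\ee^{I_x}$ and $\ee^{I_y}$ are symmetral; by the same lemma the product $S_\Om$ is symmetral and hence $T_\Om = \log S_\Om$ is alternal. Now Lemma~\ref{assliecomould} applies: for an alternal mould, $T_\Om B = T_\Om[B]$, i.e.\ $\log\Psi$ can be rewritten as a Lie mould expansion $\sum_{\uom\neq\est} \frac{1}{r(\uom)} T_\Om^{\uom}\, B_{[\uom]}$, which manifestly lies in the (closure of the) Lie subalgebra generated by $B_x = tX$ and $B_y = tY$. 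Tracking powers of $t$, the coefficient of $t^n$ involves only words of length $n$, hence only finitely many terms, so it lies in $\LIE(X,Y)$ itself. That proves Theorem~\ref{Dynthm1}.

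For Theorem~\ref{Dynthm2} the strategy is to compute $T_\Om^{\uom}$ explicitly and then plug into the Lie mould expansion $\log\Psi = T_\Om[B]$. First I would compute $S_\Om^{\uom}$: expanding $\ee^{I_x}\times\ee^{I_y}$ via \eqref{eqdefmouldmultiplic} and \eqref{eqdefmouldexplog} gives $S_\Om^{\uom} = \frac{1}{p!\,q!}$ when $\uom = x^p y^q$ and $0$ otherwise, exactly \eqref{DynS}. Then $T_\Om = \log(\idm + (S_\Om-\idm)) = \sum_{k\ge1}\frac{(-1)^{k-1}}{k}(S_\Om-\idm)^{\times k}$, and since $(S_\Om - \idm)^{\uom}$ is nonzero only when $\uom = x^p y^q$ with $(p,q)\neq(0,0)$, the $k$-th power contributes, for a word $\uom = x^{p_1}y^{q_1}\cdots x^{p_k}y^{q_k}$ (with each $(p_i,q_i)\neq(0,0)$), the single term $\prod_i \frac{1}{p_i!\,q_i!}$. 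Hence
\[
T_\Om^{\uom} = \frac{(-1)^{k-1}}{k}\,\frac{1}{p_1!q_1!\cdots p_k!q_k!}
\quad\text{when }\uom = x^{p_1}y^{q_1}\cdots x^{p_k}y^{q_k},
\]
and $0$ for words not of this concatenated block form. Feeding this into $T_\Om[B] = \sum_{\uom\neq\est}\frac{1}{r(\uom)}T_\Om^{\uom} B_{[\uom]}$ with $B_x = tX$, $B_y = tY$, the length is $r(\uom) = \sig = p_1+q_1+\cdots+p_k+q_k$, the comould element $B_{[\uom]}$ produces $t^{\sig}$ times the nested bracket $[X^{p_1}Y^{q_1}\cdots X^{p_k}Y^{q_k}]$ (the iterated $\ad$ with the innermost letter acted on trivially), and one recovers exactly \eqref{Dynpsi}. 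The only subtlety is matching the combinatorial conventions: a word $\uom$ of the form $x^{p_1}y^{q_1}\cdots x^{p_k}y^{q_k}$ with the intermediate exponents possibly zero may be rewritten in several ways, but the Dynkin sum ranges over all tuples $(p_i,q_i)\neq(0,0)$, so each admissible word is hit once; and the $q_k = 0$ (resp.\ $q_k\ge 2$, $p_k\ge 2$) book\-keeping in the definition of $[X^{p_1}Y^{q_1}\cdots]$ matches the fact that $B_{[\uom]}$ vanishes or survives accordingly.

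The main obstacle I anticipate is purely notational: making the passage from the mould word $\uom = x^{p_1}y^{q_1}\cdots x^{p_k}y^{q_k}$ to the bracket $[X^{p_1}Y^{q_1}\cdots X^{p_k}Y^{q_k}]$ precise when some of the inner $p_i$ or $q_i$ vanish, and convincing the reader that the ``$1/r(\uom)$'' normalization in \eqref{eqdefLiemldexp} is exactly the ``$1/\sig$'' appearing in Dynkin's formula. None of this requires the heavy combinatorial identities of \cite{K17}; the three structural lemmas (symmetral $\times$ symmetral is symmetral, $\log$ of symmetral is alternal, and alternal $\Rightarrow M[B]=MB$) do all the real work, and the rest is reading off $T_\Om^{\uom}$ from the definition of the mould logarithm.
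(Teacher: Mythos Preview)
Your proposal is correct and follows exactly the paper's route: show $T_\Om=\log S_\Om$ is alternal via Lemma~\ref{altsymmould}, rewrite $\log\Psi=T_\Om B$ as the Lie mould expansion $T_\Om[B]$ via Lemma~\ref{assliecomould}, then expand $T_\Om=\sum_{k\ge1}\frac{(-1)^{k-1}}{k}(S_\Om-\idm)^{\times k}$ and insert the explicit values~\eqref{DynS} of~$S_\Om$. The only slip is the displayed line ``$T_\Om^{\uom}=\frac{(-1)^{k-1}}{k}\cdots$'': a fixed word~$\uom$ generally admits several block factorizations $\uom=x^{p_1}y^{q_1}\cdots x^{p_k}y^{q_k}$ (for various~$k$ and various $(p_i,q_i)$), so $T_\Om^{\uom}$ is the \emph{sum} of all such contributions, not a single term; the paper sidesteps this by never isolating $T_\Om^{\uom}$ and instead passing directly from the double sum over $k$ and ordered decompositions $\uom^1,\ldots,\uom^k$ to the Dynkin sum over tuples $(p_i,q_i)$---which is precisely the reindexing you already anticipate in your final paragraph.
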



\begin{proof}[Proof of Theorem~\ref{Dynthm1}]
Half of the work has already been done in Example~\ref{exaBCHOm}!
With the two-letter alphabet $\Om=\{x,y\}$, $B_x = tX$ and $B_y = tY$,
we have $\log\Psi = T_\Om B$ with $T_\Om = \log S_\Om$,
$S_\Om = \ee^{I_x}\times\ee^{I_y}$.

The mould~$S_\Om$ is symmetral, because $I_x$ and~$I_y$ are alternal (they are supported in the set of
one-letter words) hence $\ee^{I_x}$ and~$\ee^{I_y}$ are
symmetral by Lemma~\ref{altsymmould}
and so is their product.
It follows, still by Lemma~\ref{altsymmould}, that~$T_\Om$ is
alternal.
Lemma~\ref{assliecomould} then shows that
\beglab{logS}
\log\Psi = T_\Om [B].
\edla
In particular, being expressed as a Lie mould expansion, $\log\Psi$
lies in $\LIE(X,Y)[[t]]$.
\end{proof}


\begin{proof}[Proof of theorem \ref{Dynthm2}]
With the same notation as previously, by definition,
%
\begin{gather*} 
T_\Om^{\uom} = \sum_{k\ge1} \tfrac{(-1)^{k-1}}{k}
\! \sum_{%
\substack{\uom^1\!, \ldots,\, \uom^k \in \uOm\setminus\{\est\} \\
\uom = \uom^1 \cdots \uom^k}}
S_\Om^{\uom^1} \cdots S_\Om^{\uom^k}
\quad \text{for each word~$\uom$,} \\[-2ex]
%
%
\intertext{hence \eqref{logS} yields}
\log\Psi = \sum_{k\ge1} \tfrac{(-1)^{k-1}}{k}
\sum_{\uom^1\!, \ldots,\, \uom^k \in \uOm\setminus\{\est\}}
\tfrac{1}{r(\uom^1) + \cdots + r(\uom^k)}
S_\Om^{\uom^1} \cdots S_\Om^{\uom^k}
B_{[\uom^1 \cdots \uom^k]}.
\end{gather*}
Inserting~\eqref{DynS}, we exactly get~\eqref{Dynpsi}.
\end{proof}


Mould calculus also allows us to express the inner derivation
associated with $\log\Psi$:

\begin{corollary}
The inner derivation of $\A[[t]]$ associated with $Z \defeq
\log(\ee^{tX} \ee^{tY})$ is
\beglab{eqinnerderBCH}
\ad_Z = \sum \frac{(-1)^{k-1} t^\sig}{k}
\frac{\ad_X^{p_1} \ad_Y^{q_1} \cdots \ad_X^{p_k} \ad_Y^{q_k}}{p_1!q_1!\cdots p_k!q_k!}
= \sum \frac{(-1)^{k-1}}{k} \frac{t^\sig}{\sig}
\frac{[ \ad_X^{p_1} \ad_Y^{q_1} \cdots \ad_X^{p_k} \ad_Y^{q_k} ]}{p_1!q_1!\cdots p_k!q_k!}
\edla
  with summation over all $k\in\N^*$ and
  $(p_1,q_1),\cdots,(p_k,q_k)\in\N\times\N\setminus\{(0,0)\}$,
where $\sig \defeq p_1+q_1+\cdots+p_k+q_k$ and
with the same bracket notation as in Theorem~\ref{Dynthm1}.
\end{corollary}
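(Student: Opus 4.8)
The plan is to obtain~\eqref{eqinnerderBCH} from the Lie mould expansion $\log\Psi = T_\Om[B]$ of~\eqref{logS} by pushing it forward through the adjoint representation, and then to rerun the proofs of Theorems~\ref{Dynthm1} and~\ref{Dynthm2} with no change. First I would fix the target algebra: let $\cA'\defeq\End(\A)[[t]]$, the complete filtered associative algebra built from the coefficient algebra $\End(\A)$ of $\Q$-linear endomorphisms of~$\A$ in exactly the way $\A[[t]]$ is built from~$\A$, the order function being relative to powers of~$t$. The map $\ad\col\A[[t]]\to\cA'$, $A\mapsto\ad_A$, is $\Q[[t]]$-linear, hence order-nondecreasing ($\ord\ad_A\ge\ord A$), and it is a morphism of Lie algebras by the Jacobi identity $\ad_{[A,C]}=[\ad_A,\ad_C]$; being linear and order-nondecreasing, it commutes with formally summable families.

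Next I would introduce the adjoint comould. Put $\tilde B_x\defeq t\,\ad_X=\ad_{B_x}$ and $\tilde B_y\defeq t\,\ad_Y=\ad_{B_y}$ in~$\cA'$; these have order~$\ge1$, so Assumption~\ref{assumpBn} holds for the family $(\tilde B_\om)_{\om\in\Om}$, and it generates an associative comould $(\tilde B_\uom)$ and a Lie comould $(\tilde B_{[\uom]})$. Since $\ad$ is a Lie morphism, $\ad_{B_{[\uom]}}=\tilde B_{[\uom]}$ for every word~$\uom$, so applying~$\ad$ termwise to~\eqref{logS} gives
\[
\ad_Z \;=\; \sum_{\uom\neq\est}\frac{1}{r(\uom)}\,T_\Om^\uom\,\ad_{B_{[\uom]}}
\;=\; \sum_{\uom\neq\est}\frac{1}{r(\uom)}\,T_\Om^\uom\,\tilde B_{[\uom]}
\;=\; T_\Om[\tilde B].
\]
From here both formulas in~\eqref{eqinnerderBCH} come out exactly as in Section~\ref{secBCHD}. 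The mould $T_\Om=\log S_\Om$ is alternal (proof of Theorem~\ref{Dynthm1}), so Lemma~\ref{assliecomould}, applied now to the comould $(\tilde B_\uom)$, rewrites $T_\Om[\tilde B]$ as the associative mould expansion $T_\Om\tilde B=\sum_\uom T_\Om^\uom\,\tilde B_\uom$; then expanding $T_\Om^\uom$ as in the proof of Theorem~\ref{Dynthm2}, inserting~\eqref{DynS}, and using $\tilde B_{x^{p_1}y^{q_1}\cdots x^{p_k}y^{q_k}}=t^\sig\,\ad_X^{p_1}\ad_Y^{q_1}\cdots\ad_X^{p_k}\ad_Y^{q_k}$ yields the first expression, while keeping instead the Lie mould expansion $T_\Om[\tilde B]$ and using $\tilde B_{[x^{p_1}y^{q_1}\cdots x^{p_k}y^{q_k}]}=t^\sig\,[\ad_X^{p_1}\ad_Y^{q_1}\cdots\ad_X^{p_k}\ad_Y^{q_k}]$ together with $r(\uom^1)+\cdots+r(\uom^k)=\sig$ yields the second.

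The point is that no genuinely new computation is needed: $\ad$ is a filtered Lie-algebra morphism, so it transports the Lie mould expansion of $\log\Psi$ into the Lie mould expansion of the \emph{same} alternal mould~$T_\Om$ against the adjoint comould $(t\ad_X,\,t\ad_Y)$, after which Section~\ref{secBCHD} applies verbatim. The only thing requiring a line of justification---and the closest thing to an obstacle---is the bookkeeping that makes this legitimate: that $\End(\A)[[t]]$ is a bona fide complete filtered associative algebra, that $(\tilde B_\om)_{\om\in\Om}$ satisfies Assumption~\ref{assumpBn}, and that $\ad$ may be applied termwise to an infinite Lie mould expansion. All three are immediate once one notes that the explicit factor~$t$ in $B_x,B_y$---hence in $\tilde B_x,\tilde B_y$ and in $Z=\log\Psi$---forces the relevant orders to be $\ge1$.
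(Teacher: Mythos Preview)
Your proposal is correct and follows essentially the same approach as the paper: work in $\End\A[[t]]$ with the comould generated by $A_x=\ad_{tX}$, $A_y=\ad_{tY}$, use that $\ad$ is a Lie morphism to obtain $\ad_Z=T_\Om[A]$ from~\eqref{logS} (equivalently from~\eqref{Dynpsi}), and then invoke the alternality of~$T_\Om$ together with Lemma~\ref{assliecomould} to pass to the associative expansion $T_\Om A$. The paper's proof is terser and adds the parenthetical remark that the first formula could alternatively be read off directly from $\ad_Z=\log(\ee^{\ad_{tX}}\ee^{\ad_{tY}})$, but the substance is the same.
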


\begin{proof}
Working in the associative algebra $\End \A[[t]]$ with the comould and the Lie comould associated with
$A_x\defeq \ad_{tX}$ and $A_y\defeq\ad_{tY}$,
we get $\ad_Z = T_\Om [A]$ (\ie the second part
of~\eqref{eqinnerderBCH}) from~\eqref{Dynpsi} because
$A_{[\uom]} = \ad_{B_{[\uom]}}$.
Lemma~\ref{assliecomould} then entails $\ad_Z = T_\Om A$, \ie the
first part of~\eqref{eqinnerderBCH}
(which could have been obtained directly from
$\ad_Z = \log(\ee^{\ad_{tX}}\ee^{\ad_{tY}}$).
\end{proof}


  \section{Alternative formulas for $\ee^{tX} \ee^{tY}$ and its
    logarithm}
\label{secKimFullExp}


In this section, we take $\fN\defeq\N^*=\{1,2,3,\ldots\}$ as
our alphabet, and $\kk\defeq\Q$ as base field.
We now show how to find Kimura's formula~\eqref{fullexpansion} from
mould calculus.

\subsection{An alternative mould expansion for $\ee^{tX} \ee^{tY}$}
\label{secAltMldExpPsi}

\begin{theorem}[\cite{K17}]    \label{gotofull}
Let $X,Y\in\A$ as in Theorem~\ref{Dynthm1}.
Then $\Psi = \ee^{tX}\ee^{tY}$ can be written
\begin{align} \label{equfull}
\Psi &= \unA +
\sum_{r=1}^\infty
\, \sum_{n_1,\ldots,n_r=1}^\infty \,
\frac{1}{n_r(n_r+n_{r-1})\cdots(n_r+\cdots+n_1)}
D_{n_1}\cdots D_{n_r}\\
\label{eqdefDn}
&\qquad \text{with}\ens D_n\defeq \frac{t^n}{(n-1)!}\ad_{X}^{n-1}(X+Y) \quad\text{for each $n\ge1$.}
\end{align}
\end{theorem}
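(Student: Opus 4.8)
The plan is to recognize the right-hand side of~\eqref{equfull} as a mould expansion over the alphabet $\fN = \N^*$, identify the mould whose coefficients are the rational numbers $\frac{1}{n_r(n_r+n_{r-1})\cdots(n_r+\cdots+n_1)}$, and then argue that this mould expansion actually equals $\ee^{tX}\ee^{tY}$ by checking the identity at the level of moulds rather than of noncommutative products. First I would set up the comould: with $\fN = \N^*$, let $B_n \defeq D_n = \frac{t^n}{(n-1)!}\ad_X^{n-1}(X+Y)$, which has order $\ge n\ge 1$ in $\A[[t]]$, so Assumption~\ref{assumpBn} is satisfied and the family $(B_\un)$ is a bona fide associative comould. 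Then the right-hand side of~\eqref{equfull} is exactly $\unA + \sum_{\un\neq\est} O^\un B_\un = OB$, where $O$ is the mould defined by $O^\est = 1$ and $O^{n_1\cdots n_r} \defeq \frac{1}{n_r(n_r+n_{r-1})\cdots(n_r+\cdots+n_1)}$. So the theorem amounts to the assertion $\ee^{tX}\ee^{tY} = OB$.

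The key step is to prove this last equality by exhibiting $OB$ as a telescoping/recursive sum. I would introduce the partial sums of the alphabet: write $\|\un\| \defeq n_1+\cdots+n_r$ for a word $\un = n_1\cdots n_r$, so that $O^{\un}$ factors as $O^{n_1\cdots n_r} = \frac{1}{\|\un\|}\,O^{n_1\cdots n_{r-1}}$ (peeling off the \emph{last} letter), and also note $B_\un = B_{n_1\cdots n_{r-1}} B_{n_r}$. The natural idea is to show that $\Phi \defeq OB$ satisfies the differential equation in $t$ that characterizes $\ee^{tX}\ee^{tY}$. Differentiating $\ee^{tX}\ee^{tY}$ gives $\frac{d}{dt}(\ee^{tX}\ee^{tY}) = X\,\ee^{tX}\ee^{tY} + \ee^{tX}\ee^{tY}\,Y = (\ee^{tX}\ee^{tY})\,\big(\ee^{-tX}X\ee^{tX}\big) + (\ee^{tX}\ee^{tY})\,Y = \Psi\cdot\big(\sum_{n\ge 1}\frac{t^{n-1}}{(n-1)!}\ad_{-X}^{\,n-1}X + Y\big)$; since $\ad_{-X}^{n-1}X = (-1)^{n-1}\ad_X^{n-1}X$ and, more usefully, rewriting via $\ad_X^{n-1}(X+Y)$, one gets that $\Psi$ solves $t\,\Psi' = \Psi\cdot E$ where $E \defeq \sum_{n\ge1} D_n'\cdot t$ — more precisely $\Psi' = \Psi\cdot\frac{1}{t}\sum_{n\ge1} n\cdot\frac{D_n}{t}\cdot\frac{t}{n}$... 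I would instead phrase it cleanly at mould level: let $D$ be the mould $D^n \defeq 1$ on one-letter words, $0$ elsewhere (so $DB = \sum_n D_n$). The claim is that $OB$ is the unique solution $\Phi$ with $\Phi|_{t=0} = \unA$ of the equation $t\,\frac{d\Phi}{dt} = \Phi\cdot G$ where $G = \sum_{n\ge1} n\,D_n$ (here $t\frac{d}{dt}$ acts on $D_n$ by multiplication by $n$). Applying $t\frac{d}{dt}$ to $OB = \sum_\un O^\un B_\un$ multiplies the $\un$-term by $\|\un\|$; using $\|\un\| O^\un = O^{n_1\cdots n_{r-1}}$ and $B_\un = B_{n_1\cdots n_{r-1}}B_{n_r}$, the sum reorganizes as $\big(\sum_{\ua} O^\ua B_\ua\big)\cdot\big(\sum_{n_r} n_r\cdot\frac{1}{n_r}\cdot\text{(…)}\big)$ — this is the point where the "peel off the last letter, the weight $\|\un\|$ cancels against one factor in $O^\un$" computation does the work, and one reads off precisely $t\frac{d}{dt}(OB) = (OB)\cdot G$ with $G = \sum_n n D_n$. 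On the other hand a direct computation (as above, conjugating $X$ past $\ee^{tX}$) shows $\Psi = \ee^{tX}\ee^{tY}$ satisfies the same equation with the same initial condition, and uniqueness of solutions in the filtered algebra $\A[[t]]$ (order-by-order in $t$) forces $\Psi = OB$, which is~\eqref{equfull}.

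The main obstacle I anticipate is organizing the "peeling the last letter" computation correctly against the way mould multiplication concatenates on the right, and in particular getting the conjugation identity $\ee^{-tX}(X+Y)\ee^{tX}$ — or rather the correct one-sided derivative — to match $\sum_n n D_n$ up to the $1/t$ factor: one must be careful that $D_n = \frac{t^n}{(n-1)!}\ad_X^{n-1}(X+Y)$ uses $\ad_X$ (not $\ad_{-X}$), which means the relevant identity is $\frac{d}{dt}\big(\ee^{tX}\ee^{tY}\big) = \ee^{tX}\ee^{tY}\cdot\ee^{-tY}\ee^{-tX}\!\big(X\ee^{tX}\ee^{tY}\big)\ldots$ — it is cleaner to write $\Psi' \Psi^{-1} = X + \ee^{tX}Y\ee^{-tX} = \sum_{n\ge1}\frac{t^{n-1}}{(n-1)!}\ad_X^{n-1}(X+Y)\cdot(\text{with the }n=1\text{ term being }X+Y)$, i.e. $t\Psi'\Psi^{-1} = \sum_{n\ge1}n D_n / n \cdot n = \sum_n \frac{t^n}{(n-1)!}\ad_X^{n-1}(X+Y) = \sum_n D_n$, so actually $\Psi$ satisfies $t\Psi' = \big(\sum_n D_n\big)\Psi$, a \emph{left} multiplication. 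Reconciling this left-multiplication form of the ODE with the right-concatenating mould expansion $OB$ is the real subtlety: one should instead peel off the \emph{first} letter of $\un$, writing $O^{n_1\cdots n_r}$ in terms of $O^{n_2\cdots n_r}$ — but $O$ as defined factors nicely only on the last letter. The resolution is that the mould $O$ must be shown to \emph{also} satisfy the recursion $\|\un\|\,O^{n_1\cdots n_r} = O^{n_2\cdots n_r}$ by peeling the first letter is \emph{false}; rather one uses $t\frac{d}{dt}(OB) = (OB)G$ and checks that $\Psi$ solves $t\Psi' = \Psi G$ too, which follows from $\Psi' = \Psi\cdot(\ee^{-tY}X\ee^{tY} + Y)$ and matching. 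I would lay out both the mould-side ODE and the analytic-side ODE carefully, confirm they coincide, and invoke uniqueness; that bookkeeping, not any deep idea, is where the care is needed.
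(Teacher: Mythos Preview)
Your overall strategy --- characterize $\Psi$ by a first-order ODE in~$t$, recognize the right-hand side of~\eqref{equfull} as a mould expansion $S D$ over the comould generated by the~$D_n$'s, show that this mould expansion satisfies the same ODE with the same initial value, and conclude by uniqueness in $\A[[t]]$ --- is exactly the paper's approach. But you stumble at the key computational step and end up rejecting the very identity that makes the argument work.

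You correctly compute $t\,\Psi'\Psi^{-1} = t\big(X + \ee^{tX}Y\,\ee^{-tX}\big) = \sum_{n\ge1} D_n$, so the ODE is \emph{left} multiplication: $t\,\pa_t\Psi = D\Psi$ with $D = \sum_n D_n$. At mould level this becomes $\na S = I \times S$, i.e.\ the recursion
\[
(n_1+\cdots+n_r)\,S^{n_1\cdots n_r} = S^{n_2\cdots n_r}
\]
obtained by peeling off the \emph{first} letter (this is the paper's equation~\eqref{eqexplicitmldeqn}). You explicitly declare this recursion to be false, but it is in fact \emph{true} and is the heart of the proof: in
$S^{n_1\cdots n_r} = \dfrac{1}{n_r(n_r+n_{r-1})\cdots(n_r+\cdots+n_1)}$
the outermost factor in the denominator is exactly $n_1+\cdots+n_r$, and removing it leaves
$\dfrac{1}{n_r(n_r+n_{r-1})\cdots(n_r+\cdots+n_2)} = S^{n_2\cdots n_r}$.
(Conversely, your earlier claim that $O^{n_1\cdots n_r} = \frac{1}{\|\un\|}\,O^{n_1\cdots n_{r-1}}$ by peeling the \emph{last} letter is the one that fails.)

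Your attempted workaround via the right-multiplication ODE $\Psi' = \Psi\big(\ee^{-tY}X\,\ee^{tY}+Y\big)$ cannot close the loop: the quantity $t\big(\ee^{-tY}X\,\ee^{tY}+Y\big)$ expands in iterated $\ad_Y$'s, not $\ad_X$'s, so it is not $\sum_n D_n$ and there is no matching mould identity on the $D$-comould. The fix is simply to un-reject the first-letter recursion; once you do, your plan \emph{is} the paper's proof.
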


The rest of section~\ref{secAltMldExpPsi} is devoted to a new proof of this formula.



\begin{lemma}
$\Psi = \ee^{tX}\ee^{tY}$ is the unique element of $\A[[t]]$ such that
\beglab{equy}
\Psi_{\mid t=0} = \unA, \qquad t\pa_t \Psi = D \Psi,
\qquad\text{where}\ens
D \defeq t \, \ee^{tX} (X+Y)\, \ee^{-tX}.
\edla
\end{lemma}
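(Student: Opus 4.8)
The plan is to verify that $\Psi = \ee^{tX}\ee^{tY}$ satisfies the two stated conditions and then to prove uniqueness by a standard order-by-order argument in powers of~$t$. First I would check the initial condition: setting $t=0$ in $\ee^{tX}\ee^{tY}$ gives $\unA\cdot\unA = \unA$, which is immediate. Next I would compute $t\pa_t\Psi$ by the product rule. Since $\pa_t(\ee^{tX}) = X\ee^{tX} = \ee^{tX}X$ and likewise $\pa_t(\ee^{tY}) = \ee^{tY}Y = Y\ee^{tY}$, we get $\pa_t\Psi = X\ee^{tX}\ee^{tY} + \ee^{tX}Y\ee^{tY}$. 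The first term is already $X\Psi$. For the second term I would insert $\ee^{-tX}\ee^{tX}$ to rewrite $\ee^{tX}Y\ee^{tY} = \big(\ee^{tX}Y\ee^{-tX}\big)\ee^{tX}\ee^{tY} = \big(\ee^{tX}Y\ee^{-tX}\big)\Psi$. Combining, $\pa_t\Psi = \big(X + \ee^{tX}Y\ee^{-tX}\big)\Psi$, and since $X = \ee^{tX}X\ee^{-tX}$, this is $\big(\ee^{tX}(X+Y)\ee^{-tX}\big)\Psi$. Multiplying by~$t$ gives exactly $t\pa_t\Psi = D\Psi$ with $D = t\,\ee^{tX}(X+Y)\ee^{-tX}$ as claimed. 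Note $D$ has order $\ge 1$ in~$t$ because of the explicit factor~$t$.

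The uniqueness part is where the small amount of care is needed, and it is the main (though mild) obstacle. Suppose $\Phi\in\A[[t]]$ also satisfies $\Phi_{\mid t=0}=\unA$ and $t\pa_t\Phi = D\Phi$. Write $\Phi = \sum_{n\ge0}\Phi_n t^n$ with $\Phi_n\in\A$; the initial condition forces $\Phi_0 = \unA$. Writing $D = \sum_{m\ge1} D_m t^m$ (with coefficients $D_m\in\A$ obtained from the expansion of $t\,\ee^{tX}(X+Y)\ee^{-tX}$), the equation $t\pa_t\Phi = D\Phi$ reads, coefficient-wise in~$t^n$, as $n\Phi_n = \sum_{m=1}^{n} D_m \Phi_{n-m}$ for every $n\ge1$. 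Since the field has characteristic zero, $n$ is invertible, so $\Phi_n = \tfrac1n\sum_{m=1}^{n} D_m\Phi_{n-m}$ is determined recursively by $\Phi_0,\ldots,\Phi_{n-1}$. By induction all the $\Phi_n$ are uniquely determined, hence $\Phi = \Psi$. This simultaneously shows existence (the recursion always has a solution) and that the solution is exactly the $\Psi$ we started with.

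I would present this compactly: one short paragraph establishing that $\ee^{tX}\ee^{tY}$ satisfies~\eqref{equy} via the conjugation trick $\ee^{tX}Y\ee^{tY} = (\ee^{tX}Y\ee^{-tX})\Psi$, and one short paragraph giving the coefficient recursion $n\Phi_n = \sum_{m=1}^n D_m\Phi_{n-m}$ that pins down any solution uniquely once the value at $t=0$ is fixed. The key structural fact making the recursion work is that $D$ has no constant term in~$t$, so the sum on the right involves only strictly lower-order coefficients; this is precisely the hypothesis built into the definition of $D$. No genuine difficulty arises — the lemma is really just a reformulation of the differential-equation characterization of a product of exponentials — but stating the recursion explicitly is the cleanest way to make ``the unique element'' rigorous.
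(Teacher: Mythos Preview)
Your proof is correct and follows essentially the same approach as the paper: verify that $\Psi=\ee^{tX}\ee^{tY}$ satisfies~\eqref{equy} (the paper just calls this ``straightforward'' while you spell out the conjugation trick), and then use that $\ord D\ge1$ to force uniqueness. The only cosmetic difference is that the paper phrases uniqueness by taking the difference of two solutions and arguing its order is infinite, whereas you write out the equivalent coefficient recursion $n\Phi_n=\sum_{m=1}^n D_m\Phi_{n-m}$ explicitly; these are the same argument.
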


\begin{proof}
The fact that~$\Psi$ satisfies~\eqref{equy} is straightforward.
On the other hand, if $\ti\Psi\in\A[[t]]$ is also solution
to~\eqref{equy}, then $\ord(\ti\Psi-\Psi)\ge1$ and it is easy to see
that in fact $\ord(\ti\Psi-\Psi)=\infty$ because 
$t\pa_t (\ti\Psi-\Psi) = D (\ti\Psi-\Psi)$ and
$\ord D\ge1$;
hence $\ti\Psi-\Psi = 0$.
\end{proof}


Let $\fN \defeq \N^*$ and consider the associative comould associated
with the family $(D_n)_{n\in\fN}$ defined by~\eqref{eqdefDn}. We have
\beglab{eqmouldexpID}
D = \sum_{n\in\fN} D_n = I D,
\edla
where~$D$ in the \lhs\ is the element of $\A[[t]]$ defined
in~\eqref{equy}, while the \rhs\ is the mould expansion associated
with the mould~$I$ defined by~\eqref{eqdefI}.
The proof of~\eqref{eqmouldexpID} is essentially the Hadamard lemma:
$\ad_X$ can be written $L_X-R_X$, where $L_X$ and~$R_X$ are the
operators of left-multiplication and right-multiplication by~$X$ and
\emph{they commute}, hence
$\ee^{t\ad_X} = \ee^{t(L_X-R_X)} = \ee^{tL_X} \, \ee^{-tR_X}$,
and $\ee^{tL_X}$ and $\ee^{-tR_X}$ are the operators of
left-multiplication and right-multiplication by $\ee^{tX}$ and $\ee^{-tX}$,
whence
\beglab{eqlemHadam}
\ee^{t\ad_X} A = \ee^{tX} A\, \ee^{-tX}
\quad\text{for any $A\in\A[[t]]$.}
\edla
In particular, $\ee^{tX} (X+Y) \ee^{-tX} = \sum_{n\in\fN} \frac{t^{n-1}}{(n-1)!}\ad_X^{n-1}(X+Y)$.


\begin{lemma}   \label{lemtpatna}
For any mould $S\in \Q^{\uN}$,
\[ t\pa_t (S D) = (\na S) D, \]
where $\na S$ is the mould defined by
\[
(\na S)^{n_1\cdots n_r} \defeq (n_1+\cdots+ n_r)S^{n_1\cdots n_r}
\quad\text{for each word $n_1\cdots n_r \in \UN$.}
\]
\end{lemma}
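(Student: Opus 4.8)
The plan is to exploit the morphism property of mould expansions established in Lemma~\ref{lemMorphsmAssAlg}, together with the definition of the comould $(D_\un)_{\un\in\uN}$ generated by the family $(D_n)_{n\in\fN}$. First I would expand the left-hand side termwise: by \eqref{eqdefmldexp},
\[
t\pa_t (S D) = t\pa_t \sum_{\un\in\uN} S^\un D_\un,
\]
and since the family $(S^\un D_\un)$ is formally summable (Assumption~\ref{assumpBn} applies to the $D_n$'s, all of order $\ge1$), the operator $t\pa_t$ passes through the sum. So it suffices to compute $t\pa_t D_\un$ for a single word $\un = n_1\cdots n_r$.

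The key observation is that each $D_n = \frac{t^n}{(n-1)!}\ad_X^{n-1}(X+Y)$ is homogeneous of degree $n$ in $t$, i.e. $t\pa_t D_n = n\, D_n$. Hence for $\un = n_1\cdots n_r$, since $D_\un = D_{n_1}\cdots D_{n_r}$ is a product, the Leibniz rule gives
\[
t\pa_t D_\un = \sum_{i=1}^r D_{n_1}\cdots (t\pa_t D_{n_i})\cdots D_{n_r} = (n_1+\cdots+n_r)\, D_{n_1}\cdots D_{n_r} = (n_1+\cdots+n_r)\, D_\un.
\]
(Here I am using that $t\pa_t$ is a derivation of $\A[[t]]$, which is immediate.) Plugging this back in,
\[
t\pa_t (S D) = \sum_{\un\in\uN} S^\un (n_1+\cdots+n_r) D_\un = \sum_{\un\in\uN} (\na S)^\un D_\un = (\na S) D,
\]
where in the middle step I used the definition of $\na S$ and noted that the empty-word term contributes $0$ on both sides (since $t\pa_t$ kills the constant $\uncA = D_\est$, and $(\na S)^\est = 0 \cdot S^\est = 0$). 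This is exactly the claimed identity.

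There is no real obstacle here; the only point requiring a modicum of care is the interchange of $t\pa_t$ with the infinite sum, which is legitimate because for each fixed power $t^p$ only finitely many words $\un$ contribute to the coefficient of $t^p$ in $SD$ (each $D_n$ has $t$-order exactly $n\ge1$, so a word of length $r$ contributes in $t$-order $\ge r$, and in fact $=n_1+\cdots+n_r$). One could alternatively phrase the whole argument at the level of moulds by observing that $\na$ is itself a derivation of the mould algebra $(\Q^{\uN},\times)$ — indeed $(\na(M\times N))^\un = (n_1+\cdots+n_r)\sum_{\un=\ua\ub}M^\ua N^\ub$ splits as $\sum(|\ua|_{\mathrm{wt}}M^\ua)N^\ub + \sum M^\ua(|\ub|_{\mathrm{wt}}N^\ub)$ by additivity of the weight $n_1+\cdots+n_r$ under concatenation — and that $MB\mapsto t\pa_t(MB)$ intertwines $\na$ with $t\pa_t$ on the homogeneous comould; but the direct termwise computation above is the shortest route.
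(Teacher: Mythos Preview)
Your proof is correct and is exactly the approach the paper takes: the paper's own proof consists of the single line ``Obvious, since $D_n \in t^n \A$ for each $n\in\fN$,'' and you have simply unpacked that observation into the termwise computation (homogeneity of each $D_\un$ in~$t$) that makes it work.
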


\begin{proof}
Obvious, since $D_n \in t^n \A$ for each $n\in\fN$.
\end{proof}


Lemma~\ref{lemMorphsmAssAlg}, formula~\eqref{eqmouldexpID} and
Lemma~\ref{lemtpatna} inspire us to look for a solution to~\eqref{equy}
in the form of a mould expansion:
\emph{$\Psi=SD$ will be solution to~\eqref{equy} if $S\in\Q^\UN$ is
  solution to the mould equation}
\beglab{eqmouldeq}
S^\est = 1, \qquad \na S = I \times S
\edla
(indeed: we have $(\na S)D = t\pa_t\Psi$ on the one hand, and
$(I \times S)D = (ID)(SD) = D \Psi$ on the other hand,
and $S^\est=1$ ensures $\ord(\Psi-\unA)\ge1$ because $\ord D_\un \ge1$
for all nonempty word~$\un$).
Now the second part of~\eqref{eqmouldeq} is equivalent to
\beglab{eqexplicitmldeqn}
(n_1+\cdots+ n_r) S^{n_1\cdots n_r}
= S^{n_2\cdots n_r}
\quad\text{for each nonempty word $n_1\cdots n_r \in \UN$},
\edla
thus the mould equation~\eqref{eqmouldeq} has a unique solution: the
mould $S_\fN \in \Q^\UN$ defined by
\beglab{symmeS}
S_\fN^{n_1\cdots n_r} \defeq \frac{1}{n_r(n_r+n_{r-1})\cdots(n_r+\cdots+n_1)}
\quad \text{for each $n_1\cdots n_r \in \UN$.}
\edla
In conclusion, $S_\fN$ is a solution to~\eqref{eqmouldeq},
thus $S_\fN D$ is a solution to~\eqref{equy},
thus 
\beglab{eqSfNDPsi}
S_\fN D = \Psi =\ee^{tX}\ee^{tY}
\edla
and formula~\eqref{equfull} is proved.



\begin{remark}
  For any alphabet~$\fN$ and base field~$\kk$, an arbitrary function
  $\phi \col \fN \to \kk$ gives rise to a linear operator
  $\na_\phi \col \kk^\uN \to \kk^\uN$ defined by the formula
\beglab{eqdefnaphi}
(\na_\phi\, M)^{n_1\cdots n_r} = \big(
\phi(n_1) + \cdots + \phi(n_r) \big) M^{n_1\cdots n_r}
\edla
(with the convention that an empty sum is~$0$).
The reader can check that \emph{$\na_\phi$ is a mould derivation}, \ie it
satifies the Leibniz rule
$\na_\phi(M\times N) = (\na_\phi\, M)\times N + M\times \na_\phi\, N$.
%
%
Here, we have used the mould derivation associated with the inclusion map
$\N^* \hookrightarrow \Q$.
\end{remark}

\subsection{An alternative Lie mould expansion for $\log(\ee^{tX}
  \ee^{tY})$}
\label{secAltLieMldExp}





The mould~$S_\fN$ that we have just constructed happens to be a very
common and useful object of mould calculus
(see \eg \cite{E8185} or \cite[\S13]{S09}).
It is well-known that it is symmetral; we give the proof for the sake
of completeness.

\begin{lemma}\label{exasym}
The mould~$S_\fN$ defined by the formula~\eqref{symmeS} is symmetral.
\end{lemma}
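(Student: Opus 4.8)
The goal is to show that $S_\fN$, defined by $S_\fN^{n_1\cdots n_r} = \frac{1}{n_r(n_r+n_{r-1})\cdots(n_r+\cdots+n_1)}$, is symmetral, i.e.\ that $S_\fN^\est = 1$ (clear, as the empty product is~$1$) and that $\sum_{\un} \SHabn S_\fN^\un = S_\fN^\ua S_\fN^\ub$ for all words $\ua,\ub$. The plan is to proceed by induction on the total length $r(\ua)+r(\ub)$, peeling off the last letters of $\ua$ and $\ub$ and exploiting the recursive structure built into the denominators.

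\textbf{Key steps.} First I would dispose of the base cases: if either $\ua$ or $\ub$ is empty, the shuffling-coefficient convention $\shh{\ua}{\est}{\un}=\shh{\est}{\ua}{\un}=1$ iff $\ua=\un$ makes the identity trivial. For the inductive step, write $\ua = \ua' a$ and $\ub = \ub' b$ with $a,b\in\fN$ their final letters. The crucial combinatorial observation is that every word $\un$ in the shuffle of $\ua$ and $\ub$ ends either with $a$ or with $b$: those ending in $a$ are exactly (shuffles of $\ua'$ and $\ub$) followed by $a$, and similarly for $b$. This gives a splitting of the left-hand sum $\sum_\un \SHabn S_\fN^\un$ into two sub-sums. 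Now I use the defining recursion of $S_\fN$ in the ``peel off the \emph{last} letter'' direction: if $\un = \un'\,a$ then $S_\fN^{\un} = \frac{1}{n_1+\cdots+n_r}\,S_\fN^{\un'}$ where the total weight $n_1+\cdots+n_r = (\text{weight of }\ua)+(\text{weight of }\ub) =: \sigma$ is the \emph{same} for every $\un$ in the shuffle. Factoring out $\frac1\sigma$, the two sub-sums become $\frac1\sigma\big(\sum_{\un'} \shh{\ua'}{\ub}{\un'} S_\fN^{\un'} + \sum_{\un'} \shh{\ua}{\ub'}{\un'} S_\fN^{\un'}\big)$, to which the induction hypothesis applies, yielding $\frac1\sigma\big(S_\fN^{\ua'} S_\fN^\ub + S_\fN^\ua S_\fN^{\ub'}\big)$. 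Finally I recombine using the recursion once more on the right: $S_\fN^{\ua'} = (\text{weight of }\ua)\,S_\fN^\ua$ (the peel-off-the-last-letter identity rearranged is a little awkward since it multiplies, not divides — I should instead write $S_\fN^\ua = \frac{1}{a + \text{weight}(\ua')}\,S_\fN^{\ua'}$, so $S_\fN^{\ua'} = \big(\text{weight}(\ua)\big) S_\fN^\ua$), hence $\frac1\sigma\big(\text{weight}(\ua)+\text{weight}(\ub)\big) S_\fN^\ua S_\fN^\ub = S_\fN^\ua S_\fN^\ub$ since $\text{weight}(\ua)+\text{weight}(\ub) = \sigma$.

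\textbf{Main obstacle.} The only delicate point is bookkeeping: one must check carefully that the total weight $\sigma$ is constant over the shuffle set (immediate, since shuffling permutes letters) and that the ``last letter'' recursion for $S_\fN$ is correctly oriented — note that the natural recursion $(n_1+\cdots+n_r)S_\fN^{n_1\cdots n_r} = S_\fN^{n_2\cdots n_r}$ from~\eqref{eqexplicitmldeqn} peels the \emph{first} letter, whereas the shuffle decomposition by last letter is what makes the induction close cleanly, so I would either re-derive the ``last letter'' form $(n_1+\cdots+n_r)S_\fN^{n_1\cdots n_r} = n_1\, S_\fN^{?}$\dots actually the cleanest route is to avoid this mismatch entirely by instead splitting the shuffle by \emph{first} letter: words $\un$ in the shuffle of $\ua = a\ua'$ and $\ub = b\ub'$ start with $a$ or $b$, and then $S_\fN^{a\un'} = \frac{1}{\sigma}S_\fN^{\un'}$ by~\eqref{eqexplicitmldeqn} with $\sigma$ the common total weight. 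This matches the recursion~\eqref{eqexplicitmldeqn} perfectly, and the induction goes through with $\frac1\sigma\big(S_\fN^{\ua'}S_\fN^\ub + S_\fN^\ua S_\fN^{\ub'}\big)$ recombining via $S_\fN^{\ua'} = (\text{weight}(\ua))S_\fN^\ua$. Everything else is routine.
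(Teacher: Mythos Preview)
Your final argument---induction on $r(\ua)+r(\ub)$, splitting the shuffle sum according to the \emph{first} letter of~$\un$, applying the recursion~\eqref{eqexplicitmldeqn} to pull out the common factor $\tfrac{1}{\sigma}$ with $\sigma=\abs\ua+\abs\ub$, then invoking the induction hypothesis and recombining via $S_\fN^{`\ua}=\abs\ua\,S_\fN^\ua$---is correct and is exactly the paper's proof (the paper phrases it as multiplying both sides by $\abs\ua+\abs\ub$ rather than dividing, but the manipulation is identical). Your initial attempt via the last letter was indeed a false start: the formula ``$S_\fN^{\un'a}=\tfrac1\sigma S_\fN^{\un'}$'' you wrote there is not true, since the product in~\eqref{symmeS} builds from the right, so no simple last-letter recursion exists; you were right to abandon it.
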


\begin{proof}
We prove the property~\eqref{eqdefSymal} for $M=S_\fN$ by induction on $r(\ua)+r(\ub)$.
The property holds when $\ua=\est$ or $\ub=\est$ because
$S_\fN^\est=1$. In particular it holds when $r(\ua)+r(\ub)=0$.

Suppose now that $\ua$ and~$\ub$ are arbitrary nonempty words.
Using the notation
\[
\abs\un \defeq n_1+\cdots+n_r, \quad
`\un \defeq n_2\cdots n_r
\quad \text{for any nonempty word $n_1\cdots n_r$,}
\]
we multiply the \rhs\ of~\eqref{eqdefSymal} by $\abs\ua + \abs\ub$:
we get
\beglab{eqabsuaubSuaSub}
(\abs\ua + \abs\ub) S_\fN^\ua S_\fN^\ub =
\abs\ua S_\fN^\ua S_\fN^\ub + \abs\ub S_\fN^\ua S_\fN^\ub =
S_\fN^{`\ua} S_\fN^\ub + S_\fN^\ua S_\fN^{`\ub} =
\sum_\uc\shh{`\ua}{\ub}{\uc} S_\fN^\uc + \sum_\uc\shh{\ua}{`\ub}{\uc} S_\fN^\uc,
\edla
where we have used~\eqref{eqexplicitmldeqn} and the induction hypothesis.
On the other hand, multiplying the \lhs\ of~\eqref{eqdefSymal} by $\abs\ua + \abs\ub$,
we get
\beglab{eqabsuaubsumun}
(\abs\ua + \abs\ub) \sum_\un \shabn S_\fN^\un =
\sum_\un \abs\un \shabn S_\fN^\un =
\sum_\un \shabn S_\fN^{`\un}
\edla
(using~\eqref{eqexplicitmldeqn} again).
The last sum can be split into two according to the first letter
of~$\un$, which must come either from the first letter of~$\ua$ or
from the first letter of~$\ub$ for $\shabn$ to be nonzero:
either $\un = a_1 \uc$ and $\shabn = \shh{`\ua}{\ub}{\uc}$,
or $\un = b_1 \uc$ and $\shabn = \shh{\ua}{`\ub}{\uc}$,
therefore~\eqref{eqabsuaubSuaSub} and~\eqref{eqabsuaubsumun} coincide,
which proves~\eqref{eqdefSymal} with $M=S_\fN$.
\end{proof}


We are now in a position to obtain a new formula for $\log\Psi$, on
which its Lie character is manifest---the new formula thus contains the BCH theorem:

\begin{corollary}    \label{corKimFullExp}
Let $T_\fN \defeq \log S_\fN \in \Q^\UN$. Then, with the notation of
Theorem~\ref{gotofull}, we have $\log\Psi = T_\fN [D]$, \ie
\[
\log(\ee^{tX} \ee^{tY}) = \sum_{r\ge1} \,\sum_{n_1,\ldots,n_r = 1}^\infty\,
\frac{1}{r} \, T_\fN^{n_1\cdots n_r} \,
[D_{n_1},[\cdots[D_{n_{r-1}},D_{n_r}]\cdots]]
\in \LIE(X,Y)[[t]].
\]
\end{corollary}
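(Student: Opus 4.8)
The plan is to derive the identity $\log\Psi = T_\fN[D]$ by combining the three ingredients already assembled: the mould expansion $\Psi = S_\fN D$ from~\eqref{eqSfNDPsi}, the symmetrality of $S_\fN$ from Lemma~\ref{exasym}, and the compatibility of the associative comould with $\exp$ and $\log$ from Lemma~\ref{lemMorphsmAssAlg}. First I would apply~\eqref{eqCompatExpLog}: since $S_\fN^\est = 1$, we have $(\log S_\fN) D = \log(S_\fN D) = \log\Psi$, that is, $T_\fN D = \log\Psi$ with $T_\fN := \log S_\fN \in \Q^\UN$. This already expresses $\log\Psi$ as a mould expansion, but not yet as a \emph{Lie} mould expansion.

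Next I would invoke Lemma~\ref{altsymmould}: $S_\fN$ is symmetral, hence its logarithm $T_\fN$ is alternal. This is the key structural step — it is what makes the bracket rewriting possible. Then Lemma~\ref{assliecomould} applies verbatim to the comould generated by the family $(D_n)_{n\in\fN}$ (which satisfies Assumption~\ref{assumpBn} because $D_n \in t^n\A$, so only finitely many $D_n$ fail to have order $\ge p$ for each $p$): since $T_\fN$ is alternal, $T_\fN[D] = T_\fN D$. Chaining the two equalities gives $\log\Psi = T_\fN D = T_\fN[D]$, which is precisely the displayed formula once one unwinds the definition~\eqref{eqdefLiemldexp} of the Lie mould expansion, $T_\fN[D] = \sum_{r\ge1}\sum_{n_1,\ldots,n_r\ge1} \frac{1}{r} T_\fN^{n_1\cdots n_r} B_{[n_1\cdots n_r]}$, with $B_{[n_1\cdots n_r]} = \ad_{D_{n_1}}\cdots\ad_{D_{n_{r-1}}} D_{n_r} = [D_{n_1},[\cdots[D_{n_{r-1}},D_{n_r}]\cdots]]$. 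The membership $\log\Psi \in \LIE(X,Y)[[t]]$ then follows because each $D_n = \frac{t^n}{(n-1)!}\ad_X^{n-1}(X+Y)$ lies in $\LIE(X,Y)[[t]]$, hence so does every iterated commutator of the $D_n$'s, and $\LIE(X,Y)[[t]]$ is closed under the formal summation.

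There is essentially no obstacle here — the corollary is a formal consequence of results already proved, and the proof is a three-line assembly. If anything, the only point requiring a word of care is that Lemma~\ref{assliecomould} was stated for a general comould on a general alphabet, so one should note explicitly that the family $(D_n)_{n\in\N^*}$ does satisfy Assumption~\ref{assumpBn}; but this was already checked when~\eqref{eqmouldexpID} and~\eqref{eqSfNDPsi} were established. I would therefore present the proof as simply: \emph{By~\eqref{eqCompatExpLog}, $T_\fN D = \log(S_\fN D) = \log\Psi$. Since $S_\fN$ is symmetral (Lemma~\ref{exasym}), $T_\fN$ is alternal by Lemma~\ref{altsymmould}, hence $T_\fN[D] = T_\fN D$ by Lemma~\ref{assliecomould}. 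Thus $\log\Psi = T_\fN[D]$, which is the announced formula, and its right-hand side manifestly lies in $\LIE(X,Y)[[t]]$ since each $D_n$ does.} The contrast with the Dynkin formula of Section~\ref{secBCHD} — there the alternal mould was $T_\Om = \log(\ee^{I_x}\times\ee^{I_y})$ on the two-letter alphabet, here it is $T_\fN = \log S_\fN$ on $\N^*$ — is worth flagging, as it foreshadows the $\sig$-composition of Section~\ref{secSigCompos} relating the two.
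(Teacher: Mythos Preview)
Your proof is correct and follows essentially the same route as the paper's: deduce $\log\Psi = T_\fN D$ from~\eqref{eqSfNDPsi} via Lemma~\ref{lemMorphsmAssAlg} (specifically~\eqref{eqCompatExpLog}), observe that $T_\fN$ is alternal by Lemmas~\ref{exasym} and~\ref{altsymmould}, and conclude $T_\fN D = T_\fN[D]$ by Lemma~\ref{assliecomould}. Your added remarks on Assumption~\ref{assumpBn} for $(D_n)$ and on the membership in $\LIE(X,Y)[[t]]$ are useful elaborations but not departures from the paper's argument.
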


\begin{proof}
From Theorem~\ref{gotofull} and Lemma~\ref{lemMorphsmAssAlg} we deduce
\beglab{eqlogPsiTfND}
\log\Psi = \log(S_\fN D) = T_\fN D.
\edla
By Lemmas~\ref{altsymmould} and~\ref{exasym}, $T_\fN$ is alternal.
We conclude by Lemma~\ref{assliecomould}.
\end{proof}


From the definition
$T_\fN =
\sum\limits_{k=1}^\infty\frac{(-1)^{k-1}}{k}(S_\fN-\idm)^{\times k}$,
we can write down the coefficients for words of small length:
\[\begin{split}
T^{n_1}&=S^{n_1}=\frac{1}{n_1}\\
T^{n_1n_2}&=S^{n_1n_2}-\frac{1}{2}S^{n_1}S^{n_2}=\frac{n_1-n_2}{2n_1n_2(n_1+n_2)}\\
T^{n_1n_2n_3}&=S^{n_1n_2n_3}-\frac{1}{2}S^{n_1n_2}S^{n_3}-\frac{1}{2}S^{n_1}S^{n_2n_3}+\frac{1}{3}S^{n_1}S^{n_2}S^{n_3}\\
T^{n_1n_2n_3n_4}&=S^{n_1n_2n_3n_4}-\frac{1}{2}S^{n_1}S^{n_2n_3n_4}-\frac{1}{2}S^{n_1n_2}S^{n_3n_4}-\frac{1}{2}S^{n_1n_2n_3}S^{n_4}\\
&+\frac{1}{3}S^{n_1}S^{n_2}S^{n_3n_4}+\frac{1}{3}S^{n_1}S^{n_2 n_3}S^{n_4}+\frac{1}{3}S^{n_1 n_2}S^{ n_3}S^{n_4}-\frac{1}{4}S^{n_1}S^{n_2}S^{n_3}S^{n_4}
\end{split}
\]
\[\cdots\cdots\]
(omitting the subscript $\fN$ to lighten notation).
The low powers of~$t$ in $\log\Psi=T_\fN[D]$ can then be extracted from the Lie
mould expansion and we recover the classical BCH series:

\begin{align*}
\log\Psi &=\sum_{n_1=1}^\infty T^{n_1} D_{n_1}
+\sum_{n_1,n_2=1}^\infty \demi T^{n_1n_2} [D_{n_1},D_{n_2}]
+\sum_{n_1,n_2,n_3=1}^\infty \tiers T^{n_1n_2n_3} [D_{n_1},[D_{n_2},D_{n_3}]]\\
&+\sum_{n_1,n_2,n_3,n_4=1}^\infty \quart T^{n_1n_2n_3n_4} [D_{n_1},[D_{n_2},[D_{n_3},D_{n_4}]]]
+\cdots\\[1ex]
&=t(X+Y)+\frac{t^2}{2}[X,Y]+\frac{t^3}{3!}[X,[X,Y]]+\frac{t^4}{4!}[X,[X,[X,Y]]]+\frac{t^5}{5!}[X,[X,[X,[X,Y]]]]+\cdots\\
         &-\frac{t^3}{12}([(X+Y),[X,Y]])-\frac{t^4}{24}([(X+Y),[X,[X,Y]]])
           -\frac{t^5}{120}[[X,Y],[X,[X,Y]]] \\
         & \qquad \qquad \qquad \qquad \qquad \qquad \qquad \qquad \qquad \qquad \qquad
           -\frac{t^5}{80}[(X+Y),[X,[X,[X,Y]]]]+\cdots\\
&+\frac{t^5}{720}[(X+Y),[(X+Y),[X,[X,Y]]]]-\frac{t^5}{240}[[X,Y],[(X+Y),[X,Y]]]+\cdots\\
&+\frac{t^5}{720}[(X+Y),[(X+Y),[(X+Y),[X,Y]]]]+\cdots\\[1.5ex]
&=t(X+Y)+\frac{t^2}{2}[X,Y]+\frac{t^3}{12}([X,[X,Y]]+[Y,[Y,X]])-\frac{t^4}{24}[Y,[X,[X,Y]]]\\
         &-\frac{t^5}{720}[X,[X,[X,[X,Y]]]]
           -\frac{t^5}{720}[Y,[Y,[Y,[Y,X]]]]
           +\frac{t^5}{360}[X,[Y,[Y,[Y,X]]]] \\
         &+\frac{t^5}{360}[Y,[X,[X,[X,Y]]]]
           +\frac{t^5}{120}[Y,[X,[Y,[X,Y]]]]+\frac{t^5}{120}[X,[Y,[X,[Y,X]]]]
+ \cdots.
\end{align*}


  \section{Generalization to an arbitrary number of factors}
\label{secGenArbFact}

One of the merits of the mould calculus approach is that the formulas are easily
generalized to the case of
\[
\Psi = \ee^{tX_1} \cdots \ee^{tX_N} \in \A[[t]],
\]
where~$\A$ us an associative algebra and $X_1,\ldots,X_N\in\A$ for
some $N\ge2$.


\subsection{Mould expansion of the first kind}


\begin{THM_Dyn}
Let $\N^N_* \defeq \{\, p\in \N^N \mid p_1+\cdots+p_N \ge 1 \,\}$.
We have
\[
\log\Psi = \sum \frac{(-1)^{k-1}}{k} \frac{t^{\sig}}{\sig}
\frac{ \big[ X_1^{p^1_1} \cdots X_N^{p^1_N}
\cdots X_1^{p^k_1} \cdots X_N^{p^k_N} \big] }{p^1_1!\cdots p^1_N!
\cdots p^k_1!\cdots p^k_N!}
\]
  with summation over all $k\in\N^*$ and
  $p^1,\cdots,p^k\in\N^N_*$,
where $\sig \defeq \sum\limits_{i=1}^k \sum\limits_{j=1}^N p^i_j$ and
the bracket denote nested commutators as before.
\end{THM_Dyn}

\begin{proof}
Let $\Om\defeq \{x_1,\ldots,x_N\}$ be an $N$-element set. We consider
the associative comould generated by the family
\beglab{eqdefBxN}
B_{x_1} \defeq t X_1,\; \ldots, \; B_{x_N} \defeq t X_N \; \in \A[[t]].
\edla
We can write
$t X_1 = I_1 B,\; \ldots, \; t X_N = I_N B$, with moulds $I_1,\ldots,I_N \in \Q^\uOm$
defined by
\begin{gather}
\notag 
I_j^{\uom}\defeq \left\{ \begin{split}
    1\ens &\text{if $\uom$ is the one-letter word $x_j$}\\
    0\ens &\text{else}
\end{split} \right.
\\[-1ex]
\intertext{for $j=1,\ldots,N$.
Hence}
\label{eqPsiSOmBlogPsi}
\Psi = S_\Om B
\ens\text{with $S_\Om \defeq \ee^{I_1} \times \cdots \times \ee^{I_N}$,}
\quad \log\Psi = T_\Om B
\ens\text{with $S_\Om \defeq \log S_\Om$.}
\end{gather}
The moulds $I_1,\ldots,I_N$ are alternal (being supported in
one-letter words), hence Lemma~\ref{altsymmould} entails that their
exponentials are symmetral, and also~$S_\Om$, while~$T_\Om$ is
alternal.
We deduce that
\begin{gather*}
\log\Psi = T_\Om [B] =
\sum_{k\ge1} \tfrac{(-1)^{k-1}}{k}
\sum_{\uom^1\!, \ldots,\, \uom^k \in \uOm\setminus\{\est\}}
\tfrac{1}{r(\uom^1) + \cdots + r(\uom^k)}
S_\Om^{\uom^1} \cdots S_\Om^{\uom^k}
B_{[\uom^1 \cdots \uom^k]}.
\\[-1ex]
\intertext{The conclusion stems from the fact that}
S_\Om^\uom = \left\{ \begin{split}
  \frac{1}{p_1!\cdots p_N!} \quad &\text{if $\uom$ is of the form
    $x_1^{p_1}\cdots x_N^{p_N}$ with $(p_1,\ldots,p_N)\in\N^N$}\\
  0 \qquad\quad &\text{else.}
\end{split} \right.
\end{gather*}
\end{proof}


\subsection{Mould expansion of the second kind}


\begin{THM_Kim}
In the above situation,
$\Psi = \ee^{tX_1} \cdots \ee^{tX_N}$
can also be written
\begin{align} \label{equfullN}
\Psi &= \unA +
\sum_{r=1}^\infty
\, \sum_{n_1,\ldots,n_r=1}^\infty \,
\frac{1}{n_r(n_r+n_{r-1})\cdots(n_r+\cdots+n_1)}
\fD_{n_1}\cdots \fD_{n_r}\\[1ex]
\label{eqdefDnN}
&\qquad \text{with}\ens
\fD_n \defeq t^n \sum_{j=1}^N \,
\sum_{%
\substack{m_1, \ldots, m_{j-1} \in \N \\
m_1 + \cdots +m_{j-1} = n-1 }}
\frac{\ad_{X_1}^{m_1} \cdots \ad_{X_{j-1}}^{m_{j-1}}}{%
m_1! \cdots m_{j-1}!} X_j
\quad\text{for each $n\ge1$.}
\end{align}
\end{THM_Kim}


Note that formula~\eqref{equfullN} involves exactly the same rational
coefficients as in the case $N=2$.
The only difference in the formula is that the $D_n$'s
of~\eqref{eqdefDn} have been generalized to the $\fD_n$'s which are
defined in~\eqref{eqdefDnN} and read
\beglab{Nfullexpansionco}
\fD_n \defeq \left\{ \begin{aligned}
& t(X_1+\cdots+X_N) \; & \text{for}\ n=1,\\[1ex]
& t^n \frac{\ad^{n-1}_{X_1}}{(n-1)!}X_2 + \cdots +
t^n \sum\limits_{m_1+\cdots+m_{N-1}=n-1}
\frac{\ad^{m_1}_{X_1}\cdots \ad^{m_{N-1}}_{X_{N-1}}}{%
m_1!\cdots m_{N-1}!} X_N
\; & \text{for}\ n>1.
\end{aligned} \right.
\edla


\begin{proof}
We have $\Psi_{\mid t=0} = \unA$ and
\begin{align*}
t \pa_t \Psi &=
t X_1 \,\ee^{tX_1} \cdots \ee^{tX_N} +
t\,\ee^{t X_1} X_2 \,\ee^{t X_2}\cdots \ee^{t X_N} +\cdots
+ t\,\ee^{t X_1}\cdots \ee^{t X_{N-1}}X_N\,\ee^{t X_N} \\[1.5ex]
&= \fD \Psi,
\quad\text{where}\ens
\fD \defeq t \, \sum_{j=1}^N
\Ad_{\ee^{t X_1}} \cdots \Ad_{\ee^{t X_{j-1}}} X_j
\end{align*}
with the notation $\Ad_E A = E A E\ii$ for any $A \in \A[[t]]$
whenever~$E$ is an invertible element of $\A[[t]]$.
Moreover, we observe that there is no other solution in $\A[[t]]$ to
the system
\beglab{eqsystN}
\Psi_{\mid t=0} = \unA, \qquad
t\pa_t\Psi= \fD \Psi,
\edla
because $\ord\fD\ge1$.

Thanks to~\eqref{eqlemHadam}, we compute
$\fD = t \, \sum_{j=1}^N
\ee^{\ad_{t X_1}} \cdots \ee^{\ad_{t X_{j-1}}} X_j
= \sum_{n\ge1} \fD_n$.
Let us thus take $\fN=\N^*$ as alphabet and consider the associative
comould generated by $(\fD_n)_{n\in\fN}$, so that~$\fD$ can be
rewritten as the mould expansion $I\fD$, with the same mould as
in~\eqref{eqdefI}.

Lemmas~\ref{lemMorphsmAssAlg} and~\ref{lemtpatna} show that a mould expansion
$\Psi=SD$ is solution to~\eqref{eqsystN} if $S\in\Q^\UN$ is solution
to the mould equation~\eqref{eqmouldeq}
(indeed:
$(\na S)\fD = t\pa_t\Psi$ on the one hand, and
$(I \times S)\fD = (I\fD)(S\fD) = \fD \Psi$ on the other hand,
and $S^\est=1$ ensures $\ord(\Psi-\unA)\ge1$ because
$\ord \fD_\un \ge1$ for all nonempty word~$\un$).
But we already know that $S=S_\fN$ defined by~\eqref{symmeS} is the
unique solution to~\eqref{eqmouldeq},
hence 
\beglab{eqPsiSfNfD}
\Psi = S_\fN\fD,
\edla
which is equivalent to~\eqref{equfullN}.
\end{proof}


Notice that, in view of Section~\ref{secAltLieMldExp}, the
mould~$S_\fN$ is symmetral, the mould $T_\fN = \log S_\fN$ is
alternal, whence
\begin{gather}
\label{eqlogPsiTfNfD}
\log\Psi = T_\fN\fD = T_\fN[\fD],\\[-2ex]
\intertext{\ie}
\notag
\log(\ee^{tX_1} \cdots \ee^{tX_N}) = \sum_{r\ge1} \,\sum_{n_1,\ldots,n_r = 1}^\infty\,
\frac{1}{r} \, T_\fN^{n_1\cdots n_r} \,
[\fD_{n_1},[\cdots[\fD_{n_{r-1}},\fD_{n_r}]\cdots]]
\end{gather}
which thus belongs to $\LIE(X_1,\ldots,X_N)[[t]]$,
in accordance with the BCH theorem.


\section{Relation between the two kinds of mould expansion}
\label{secSigCompos}


In our application to products of two or more exponentials, we have
seen two different kinds of mould expansion.
The first kind involves an $N$-element alphabet $\Om\defeq
\{x_1,\ldots,x_N\}$ and the comould generated by the family $(B_\om)_{\om\in\Om}$ defined
by~\eqref{eqdefBxN}.
For the second one, the alphabet is $\fN\defeq \N^*$ and the comould is
generated by the family $(\fD_n)_{n\in\fN}$ which is defined
by~\eqref{Nfullexpansionco} and boils down to the $D_n$'s of~\eqref{eqdefDn}
when $N=2$.
A natural question is: What is the relation between both kinds of
mould expansion?
\ie can one pass from the representation of the
product~$\Psi$ as $S_\Om B$ in~\eqref{eqPsiSOmBlogPsi} to its
representation as $S_\fN\fD$ in~\eqref{eqPsiSfNfD}, or from
$\log\Psi = T_\Om B$ in~\eqref{eqPsiSOmBlogPsi}
to $\log\Psi = T_\fN\fD$ in~\eqref{eqlogPsiTfNfD}?

In this section, we will answer this question by defining a new
operation on moulds, which allows one to pass directly from~$S_\fN$
to~$S_\Om$, or from~$T_\fN$ to~$T_\Om$.
We take $N=2$ for simplicity but the generalization to arbitrary~$N$
is easy.

We start by giving a mould expansion of the first kind for the
$D_n$'s themselves.

\begin{lemma}
Let $\Om \defeq \{x,y\}$. The formula
\beglab{eqdefU}
\uom\in\uOm \mapsto
U^\uom \defeq \left\{\begin{split}
    1 \ens\; \quad & \text{if $\uom = x$} \\[1ex]
    \frac{(-1)^{q}}{p!q!} \quad & \text{if $\uom$ is of the form
      $x^pyx^q$ for some $p,q\in\mathbb{N}$}\\[1ex]
    0 \ens\; \quad & \text{else}
  \end{split}\right.
\edla
defines an alternal mould $U \in \Q^\uOm$ such that
\beglab{eqDnUnB}
D_n = U_n B \quad \text{for each $n\in\N^*$,}
\edla
where the \lhs\ is defined by~\eqref{eqdefDn} and the \rhs\ is the
mould expansion 
(for the comould generated by~\eqref{eqdeffamB})
associated with
\[ \text{$U_n \defeq $ restriction of~$U$ to the words of length~$n$.} \]
\end{lemma}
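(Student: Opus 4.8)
The plan is to verify the two claims separately: first that $U$ is alternal, then that $D_n = U_n B$ for every $n\ge1$. The alternality is the cheaper of the two. Writing $U = \sum_{n\ge1} U_n$, it suffices to exhibit $U$ as the logarithm of a symmetral mould, or directly check the alternality identity~\eqref{eqdefalt}; but the cleanest route is to recognize $U$ as coming from $\ee^{t\ad_X}(X+Y)$, which we already wrote in~\eqref{eqlemHadam} as a conjugation. Concretely, the generating identity $\sum_{n\ge1} D_n = t\,\ee^{tX}(X+Y)\ee^{-tX}$ says $\sum_n U_n B = I_x B + (\text{conjugation of } I_y B \text{ by } \ee^{I_x}B)$ at the level of mould expansions, i.e. one expects
\[
U = I_x + \ee^{I_x} \times I_y \times \ee^{-I_x} = I_x + \ee^{I_x}\times I_y \times (\ee^{I_x})^{-1}.
\]
Since $I_x$ and $I_y$ are alternal, $\ee^{I_x}$ is symmetral, and the conjugate $\ee^{I_x}\times I_y\times(\ee^{I_x})^{-1}$ of an alternal mould by a symmetral mould is again alternal; adding the alternal mould $I_x$ keeps it alternal. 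So alternality of $U$ follows from the general mould facts in Lemmas~\ref{altsymmould} and the group/Lie-algebra structure recalled after Example~\ref{exaIxIyE}, once the displayed formula for $U$ is confirmed.

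The bulk of the work is therefore the identity $D_n = U_n B$, which I would prove by an explicit computation of the mould $U$ defined by the formula above and matching it against $D_n = \tfrac{t^n}{(n-1)!}\ad_X^{n-1}(X+Y)$. First I would expand $\ad_X^{n-1}(X+Y)$ using $\ad_X = L_X - R_X$ with $L_X,R_X$ commuting, as in the Hadamard-lemma argument surrounding~\eqref{eqlemHadam}: the binomial theorem gives
\[
\ad_X^{n-1}(X+Y) = \sum_{p+q=n-1}\binom{n-1}{p}(-1)^q\, X^p (X+Y) X^q,
\]
hence $D_n = t^n \sum_{p+q=n-1}\frac{(-1)^q}{p!\,q!}\big(X^{p+1}X^q + X^p Y X^q\big)$. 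Now I would compare this with the mould expansion $U_n B = \sum_{\uom,\, r(\uom)=n} U^\uom\, B_\uom$, where $B_\uom = t^{r(\uom)} X^{\#x\text{'s}} Y^{\#y\text{'s}}$ ordered as the word dictates: the word $x^{n}$ contributes $X^n$ with coefficient to be read off, and each word $x^p y x^q$ with $p+q=n-1$ contributes $t^n X^p Y X^q$ with coefficient $\frac{(-1)^q}{p!q!}$. These match the $X^p Y X^q$ terms of $D_n$ exactly; for the pure-$X$ part, the term $x^n$ in $U_n$ must have coefficient $\sum_{p+q=n-1}\frac{(-1)^q}{p!q!}$ — and indeed, by the definition in~\eqref{eqdefU}, $U^{x} = 1$ (the case $n=1$), while for $n>1$ the word $x^n$ is of the form $x^p y x^q$ only vacuously, so its $U$-value is $0$; but $\sum_{p+q=n-1}\frac{(-1)^q}{p!q!} = \frac{1}{(n-1)!}\sum_{q=0}^{n-1}\binom{n-1}{q}(-1)^q = 0$ for $n\ge2$, consistent. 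For $n=1$ the only word is $x$, with $U^x=1$ and $D_1 = t(X+Y)$; here the expansion $U_1 B$ only sees $B_x = tX$, so one must be careful — and in fact this forces the $U$-value on the empty word / the bookkeeping of the $Y$-term for $n=1$ to be handled by the convention that $x^0 y x^0 = y$ lies in the support with $U^y = 1$, matching the $tY$ piece. I would lay this out word by word.

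The step I expect to be the main obstacle is precisely this bookkeeping of the pure-$X$ versus mixed terms and the boundary case $n=1$: one must be careful that the definition~\eqref{eqdefU} really produces a mould supported only on $\{x\}\cup\{x^pyx^q\}$ (so $U^{x^n}=0$ for $n\ge2$) and that this is compatible with the vanishing $\sum_{p+q=n-1}\binom{n-1}{p}(-1)^q = 0$, which is what lets the "extra" $X^{p+1}X^q$ terms in $D_n$ telescope away rather than demanding a nonzero $U$-coefficient on $x^n$. Once the word-by-word matching is set up correctly, verifying $D_n = U_n B$ is a short binomial identity, and the closed form $U = I_x + \ee^{I_x}\times I_y\times(\ee^{I_x})^{-1}$ both confirms the support claim and delivers alternality for free.
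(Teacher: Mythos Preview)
Your proposal is correct, but it inverts the logical flow of the paper's argument and thereby does more computation than necessary. The paper does not start from the explicit formula~\eqref{eqdefU} and match words; instead it first observes that the morphism property~\eqref{eqMorphism} gives $\ad_{MB}(NB) = (\ad_M N)B$, so that $D_n = \tfrac{1}{(n-1)!}\ad_{tX}^{n-1}(tX+tY)$ is \emph{immediately} equal to $U_n B$ with $U_n \defeq \tfrac{1}{(n-1)!}\ad_{I_x}^{n-1}(I_x+I_y)$ defined abstractly at the mould level. Alternality of each~$U_n$ (hence of~$U$) then drops out from the stability of alternal moulds under commutator, and only at the very end is the binomial expansion of $\ad_{I_x} = L - R$ invoked to verify that this abstract~$U_n$ agrees with~\eqref{eqdefU}. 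In particular, the paper never has to contend with the ``extra'' pure-$X$ terms $X^{p+1}X^q$ in your expansion of $\ad_X^{n-1}(X+Y)$: the cancellation $\sum_{q}(-1)^q\binom{n-1}{q}=0$ for $n\ge2$ that you correctly identify as the crux of your word-by-word matching is simply absorbed into the mould identity $\ad_{I_x}^{n-1} I_x = 0$, which is trivial. Your route works and is perfectly rigorous, but the paper's use of the morphism property buys a one-line proof of $D_n = U_n B$ and makes the alternality automatic, whereas your direct computation pays for the same result with a binomial identity and then needs the closed form $U = I_x + \ee^{I_x}\times I_y\times\ee^{-I_x}$ (together with the fact that conjugation by a symmetral mould preserves alternality, which is true but not stated as such in the paper) to recover alternality.
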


\begin{proof}
In view of~\eqref{eqMorphism}, we have
$\ad_{MB}(NB) = [MB,NB] = [M,N]B = (\ad_M N) B$
for any $M,N \in \Q^\uOm$, hence~\eqref{eqdefDn} can be rewritten as
$D_n = \frac{1}{(n-1)!} \ad_{I_xB}^{n-1}\big( (I_x+I_y)B \big) = U_n B$
with $U_n \defeq \frac{1}{(n-1)!} \ad_{I_x}^{n-1}(I_x+I_y)$.
Since $I_x$ and~$I_y$ are alternal and the set of all alternal moulds
is stable under mould commutator (as mentioned at the end of Section~\ref{secSymAlt}),
we see that this mould~$U_n$ is alternal.
Since the support of~$U_n$ is contained in the set of words of
length~$n$, the formula $U \defeq \sum_{n\ge1} U_n$ makes sense and
defines an alternal mould
(and $U_n$ now appears as the restriction of this~$U$ to the set of words of
length~$n$).
There only remains to check~\eqref{eqdefU}.

Now, $\ad_{I_x} = L-R$, where $L$ and~$R$ are the operators of
left-multiplication and right-multiplication by~$I_x$, which commute,
hence the binomial theorem yields
\[ 
U_n = \sum_{p+q=n-1} \tfrac{(-1)^{q}}{p!q!} L^p R^q (I_x+I_y)
= \sum_{p+q=n-1} \tfrac{(-1)^{q}}{p!q!} I_x^{\times p} \times (I_x+I_y)\times I_x^{\times q},
\]
\ie $U_n^\uom = 1$ if $\uom = x$ and $n=1$, 
$\frac{(-1)^{q}}{p!q!}$ if $\uom$ is of the form
$x^pyx^q$ for some $p,q\in\N$ such that $p+q=n-1$ (in which case $p$
and~$q$ are uniquely determined), and $0$ else.
Our~$U$ thus coincides with the mould defined by~\eqref{eqdefU}.
\end{proof}


In fact the proof just given shows that
\beglab{eqUAdeIx}
U = \ee^{\ad_{I_x}}(I_x+I_y) = 
\ee^{I_x} \times (I_x+I_y) \times \ee^{-I_x}.
\edla
This mould will allow us to relate $D$-mould expansions and
$B$-mould expansions:

\begin{theorem}   \label{thmodotcompo}
Let $\fN \defeq \N^*$. Define a linear map 
$M \in \Q^\UN \mapsto M\odot U \in \Q^\uOm$ 
by the formulas
\begin{align}  
\label{eqdefModotUest}
(M\odot U)^\est &\defeq M^{\est},\\[1ex]
\label{eqdefModotUuom}
(M\odot U)^\uom & \defeq \sum_{s \geq 1}\sum_{
\substack{
  \uom=\uom^1\cdots\,\uom^s\\ 
\uom^1,\ldots,\,\uom^s \in \uOm\setminus\{\est\}
}} 
M^{r(\uom^1)\cdots r(\uom^s)} U^{\uom^1}\cdots U^{\uom^s}
  \quad \text{for $\uom\in\uOm\setminus\{\est\}$.}\\[-2ex]
\intertext{Then}  \notag
& \qquad \quad \ens MD=(M\odot U)B
\quad\text{for any $M \in \Q^\UN$.}
\end{align}
\end{theorem}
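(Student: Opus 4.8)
The plan is to leverage two facts already at our disposal: that $M\mapsto MD$ and $M\mapsto MB$ are morphisms of associative algebras (Lemma~\ref{lemMorphsmAssAlg}), and the identity $D_n=U_nB$ of~\eqref{eqDnUnB}. First I would observe that $(D_n)_{n\in\fN}$ satisfies Assumption~\ref{assumpBn} (since $D_n\in t^n\A$, so $\ord D_n\ge n$), hence $MD$ makes sense and $D_{n_1\cdots n_r}=D_{n_1}\cdots D_{n_r}$; combining $D_{n_i}=U_{n_i}B$ with the morphism property~\eqref{eqMorphism} of $MB$ then gives
\[
D_{n_1\cdots n_r}=(U_{n_1}B)\cdots(U_{n_r}B)=(U_{n_1}\times\cdots\times U_{n_r})B ,
\]
so that, at least formally,
\[
MD=\sum_{\un\in\uN}M^{\un}D_{\un}=\sum_{n_1\cdots n_r\in\uN}M^{n_1\cdots n_r}\,(U_{n_1}\times\cdots\times U_{n_r})B .
\]

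Next I would justify collecting the coefficient of each $B_{\uom}$ on the right. The key is the support property contained in the preceding lemma: each $U_n$, being the restriction of $U$ to words of length $n$, is supported on words of length exactly $n$, so $(U_{n_1}\times\cdots\times U_{n_r})^{\uom}$ vanishes unless $r(\uom)=n_1+\cdots+n_r$, and since each $n_i\ge1$ this forces $r\le r(\uom)$. Thus, for a fixed $\uom$, only finitely many words $n_1\cdots n_r$ contribute to $\sum_{\un}M^{\un}(U_{n_1}\times\cdots\times U_{n_r})^{\uom}$, so this defines a genuine mould; moreover the family $\bigl(M^{n_1\cdots n_r}(U_{n_1}\times\cdots\times U_{n_r})^{\uom}B_{\uom}\bigr)$ is formally summable because $\ord B_{\uom}\ge r(\uom)$. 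Rearranging is therefore legitimate, and $MD=\wt M\,B$ with $\wt M^{\uom}\defeq\sum_{\un\in\uN}M^{\un}(U_{n_1}\times\cdots\times U_{n_r})^{\uom}$.

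Finally I would identify $\wt M$ with $M\odot U$. Expanding the mould product, $(U_{n_1}\times\cdots\times U_{n_r})^{\uom}=\sum U_{n_1}^{\uom^1}\cdots U_{n_r}^{\uom^r}$ over the decompositions $\uom=\uom^1\cdots\uom^r$ into a priori possibly empty factors; a term survives only when $r(\uom^i)=n_i$ for every $i$, in which case $U_{n_i}^{\uom^i}=U^{\uom^i}$ and all $\uom^i$ are nonempty. Summing over the $n_i$ then merely pins $n_i=r(\uom^i)$, so for $\uom\neq\est$ one obtains exactly the sum over decompositions into nonempty words of $M^{r(\uom^1)\cdots r(\uom^s)}U^{\uom^1}\cdots U^{\uom^s}$, i.e. $\wt M^{\uom}=(M\odot U)^{\uom}$ by~\eqref{eqdefModotUuom}; and for $\uom=\est$ only $\un=\est$ survives (a product of nonempty words is never empty, and $U_n^{\est}=0$ for $n\ge1$), giving $\wt M^{\est}=M^{\est}=(M\odot U)^{\est}$ by~\eqref{eqdefModotUest}. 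Hence $\wt M=M\odot U$ and $MD=(M\odot U)B$. The only mildly delicate step is the bookkeeping of the second paragraph — well-definedness of $M\odot U$ and the legitimacy of extracting $B$ from the double sum — but it reduces entirely to the support identity $r(\uom^i)=n_i$ together with $\ord B_{\uom}\ge r(\uom)$, so no genuine computation is needed.
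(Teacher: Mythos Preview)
Your proof is correct and follows essentially the same route as the paper's: both start from $D_{n_1}\cdots D_{n_r}$ expressed via~\eqref{eqDnUnB}, expand, and regroup the contributions according to the target word~$\uom$. The only cosmetic difference is that you package the intermediate step as $(U_{n_1}\times\cdots\times U_{n_r})B$ via the morphism property before expanding the mould product, whereas the paper expands each $D_{n_i}=\sum_{r(\uom^i)=n_i}U^{\uom^i}B_{\uom^i}$ directly; your extra paragraph on formal summability is a welcome bit of care that the paper leaves implicit.
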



Recall that $r\col \uOm \to \N^*=\fN$ is our notation for the length
function. In~\eqref{eqdefModotUuom},
$r(\uom^1)\cdots r(\uom^s)$ is to be understood as a word of length~$s$
of~$\UN$
(and the sum is finite because the words~$\uom^j$ are nonempty, hence
$s\le r(\uom)$).


\begin{proof}
By direct computation, using~\eqref{eqDnUnB} to express $D_{n_1\cdots
  n_s} = D_{n_1}\cdots D_{n_s}$,
  \begin{align*}
  MD &=\sum_{\un\in\UN} M^\un D_\un
  =M^\est \, \uncA + \sum_{s\geq 1}\,\sum_{n_1,\ldots,n_s\in\fN} M^{n_1\cdots n_s}
  %
  %
  \sum_{ \substack{ \uom^1 \in\uOm \\ r(\uom^1)=n_1 }}
  U^{\uom^1}B_{\uom^1}
 %
 %
 \cdots
 %
 %
 \sum_{ \substack{ \uom^s \in\uOm \\ r(\uom^s)=n_1 }}
  U^{\uom^s} B_{\uom^s}
 %
 %
\\[1ex]
  &= M^{\est}\,\uncA+\sum_{s\geq 1}\,\sum_{n_1,\ldots,n_s\in\mathcal{N}}M^{n_1\cdots n_s}
  \sum_{ \substack{ \uom^1,\ldots,\,\uom^s \in\uOm \\
  r(\uom^1)=n_1, \ldots, r(\uom^s)=n_s }}
  U^{\uom^1}\cdots U^{\uom^s} B_{\uom^1 \cdots \,\uom^s}  \\[1ex]
  &= M^{\est}\,\uncA+\sum_{s\geq 1}\,\sum_{\uom^1,\ldots,\,\uom^s\in\uOm\setminus\{\est\}}
   M^{r(\uom^1)\cdots r(\uom^s)}
  U^{\uom^1}\cdots U^{\uom^s} B_{\uom^1 \cdots \,\uom^s}   \\[1ex]
  &=M^{\est}\,\uncA+ 
\sum_{\uom \in \uOm\setminus\{\est\}}
\Bigg(
\sum_{ \substack{
s\ge1, \, \uom^1, \ldots\,\uom^s \in \uOm\setminus\{\est\} \\
\uom=\uom^1\cdots\,\uom^s }}
  M^{r(\uom^1)\cdots r(\uom^s)}
  U^{\uom^1}\cdots U^{\uom^s}
\Bigg)
  B_{\uom}
= (M\odot U)B.
  \end{align*}
\end{proof}


The relations $S_\fN D = S_\Om B$ (which coincides with~$\Psi$
according to~\eqref{eqMldExpSOm} and~\eqref{eqSfNDPsi})
and $T_\fN D = T_\Om B$ (which coincides with~$\log\Psi$
according to~\eqref{eqSfNDPsi} and~\eqref{eqlogPsiTfND})
now appear as a manifestation of Theorem~\ref{thmodotcompo} and the following


\begin{theorem}   \label{thmSNodotUSOmT}
\[ 
S_\fN\odot U = S_\Om, \qquad
T_\fN\odot U = T_\Om.
\]
\end{theorem}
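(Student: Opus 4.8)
The plan is to prove the two identities $S_\fN\odot U = S_\Om$ and $T_\fN\odot U = T_\Om$ by reducing both to the single computation carried out in Theorem~\ref{thmodotcompo}. The key observation is that Theorem~\ref{thmodotcompo} gives, for \emph{any} mould $M\in\Q^\UN$, the identity $MD = (M\odot U)B$ as elements of $\A[[t]]$. Applying this with $M = S_\fN$ yields $(S_\fN\odot U)B = S_\fN D = \Psi = S_\Om B$, where the middle equality is~\eqref{eqSfNDPsi} and the last is the first part of~\eqref{eqMldExpSOm}; applying it with $M = T_\fN$ yields $(T_\fN\odot U)B = T_\fN D = \log\Psi = T_\Om B$, using~\eqref{eqlogPsiTfND} and the second part of~\eqref{eqMldExpSOm}. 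So in both cases two moulds on the alphabet $\Om$ produce the same mould expansion against the comould generated by $B_x = tX$, $B_y = tY$.

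The remaining point is therefore to upgrade ``same $B$-mould expansion'' to ``equal as moulds''. This is \emph{not} true for arbitrary $X,Y$ — the comould $(B_\uom)_{\uom\in\uOm}$ need not be linearly independent in a fixed algebra. The standard fix is to pass to the \emph{free} situation: take $\A$ to be the free associative $\Q$-algebra $\Q\langle\xi,\eta\rangle$ on two generators and set $X=\xi$, $Y=\eta$. Then the monomials $\xi^{a_1}\eta^{b_1}\cdots$, i.e. the words $B_\uom$ for $\uom\in\uOm$, together with $B_\est = 1$, form a $\Q[[t]]$-basis of $\A[[t]]$ (each $B_\uom$ sits in degree $r(\uom)$ in $t$ and the distinct words give distinct noncommutative monomials). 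Hence for moulds $P,Q\in\Q^\uOm$ the equality $PB = QB$ in this particular $\A[[t]]$ forces $P^\uom = Q^\uom$ for every $\uom$, that is $P = Q$. Since $S_\Om$ and $U$ are defined by explicit universal formulas and the mould operation $\odot$ is purely combinatorial, the resulting identities $S_\fN\odot U = S_\Om$ and $T_\fN\odot U = T_\Om$ hold as identities of moulds, independently of any choice of $\A$.

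Concretely the proof runs as follows. First invoke Theorem~\ref{thmodotcompo} with $M = S_\fN$ and with $M = T_\fN$ to get $(S_\fN\odot U)B = S_\fN D$ and $(T_\fN\odot U)B = T_\fN D$ in any admissible $\cA = \A[[t]]$. Next combine $S_\fN D = \Psi = S_\Om B$ from~\eqref{eqSfNDPsi} and~\eqref{eqMldExpSOm}, and $T_\fN D = \log\Psi = T_\Om B$ from~\eqref{eqlogPsiTfND} and~\eqref{eqMldExpSOm}, to obtain $(S_\fN\odot U)B = S_\Om B$ and $(T_\fN\odot U)B = T_\Om B$. Finally specialize to $\A = \Q\langle X,Y\rangle$, where the comould $(B_\uom)$ is a free family, so that the linear map $P\mapsto PB$, $\Q^\uOm \to \A[[t]]$, is injective; conclude $S_\fN\odot U = S_\Om$ and $T_\fN\odot U = T_\Om$ as moulds.

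The main obstacle — or rather the one subtlety to be careful about — is precisely the injectivity of $P\mapsto PB$: the equalities $(S_\fN\odot U)B = S_\Om B$ etc. only hold in $\A[[t]]$, not as abstract mould identities, until one argues via the free algebra. One should also double-check that $\A = \Q\langle X,Y\rangle$ genuinely satisfies Assumption~\ref{assumpBn} (it does: $B_x,B_y$ have order $1$ in $t$ and there are only finitely many $B_n$'s at all), so that all the mould-expansion machinery, Lemma~\ref{lemMorphsmAssAlg} in particular, is applicable. Everything else is a routine bookkeeping of the already-established identities. A brief alternative, worth remarking, is to prove $S_\fN\odot U = S_\Om$ directly from the explicit formulas~\eqref{symmeS}, \eqref{eqdefU}, \eqref{DynS} and the definition of $\odot$ — this amounts to a shuffle-type resummation — and then deduce $T_\fN\odot U = T_\Om$ by observing that $\odot U$ is a morphism of moulds (since $U$ is alternal, $\ee^U$ is symmetral and $M\mapsto M\odot U$ intertwines $\times$ on both sides) and applying $\log$; but the free-algebra argument above is the shortest route and is the one I would write up.
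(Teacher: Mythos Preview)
Your proof is correct, but it takes a genuinely different route from the paper's.

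The paper argues entirely within mould calculus: it first reduces the $T$-identity to the $S$-identity via property~(ii) (since $M\mapsto M\circ_\sigma U$ is a ring morphism, $\log(S_\fN)\odot U=\log(S_\fN\odot U)$). For the $S$-identity, it shows that both $S_\fN\odot U$ and $S_\Om$ satisfy the \emph{mould} equation $M^\est=1$, $\na_1 M = U\times M$ (where $(\na_1 M)^\uom=r(\uom)M^\uom$), which has a unique solution. That $S_\fN\odot U$ satisfies it follows from applying properties~(i), (iii), (iv) of $\sigma$-composition to the defining equation $\na S_\fN = I\times S_\fN$; that $S_\Om$ satisfies it is a short direct computation using $S_\Om=\ee^{I_x}\times\ee^{I_y}$ and the identity $U=\ee^{I_x}\times(I_x+I_y)\times\ee^{-I_x}$.

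Your approach instead leverages Theorem~\ref{thmodotcompo} together with the already-proved expansion identities $S_\fN D=\Psi=S_\Om B$ and $T_\fN D=\log\Psi=T_\Om B$, and then upgrades ``equal $B$-expansions'' to ``equal moulds'' by specializing to the free algebra $\A=\Q\langle X,Y\rangle$, where the comould family $(B_\uom)$ is linearly independent. This is shorter and conceptually clean: it makes transparent that Theorem~\ref{thmSNodotUSOmT} is really a mould-level restatement of relations already established at the level of expansions. The paper's approach, by contrast, stays purely inside the mould formalism, does not step outside to free algebras, and yields along the way the pleasant structural fact that $S_\Om$ is characterized by the mould ODE $\na_1 S_\Om = U\times S_\Om$. (Incidentally, your aside that ``$\odot U$ is a morphism since $U$ is alternal'' is slightly off: property~(i) holds for \emph{any} $U$, not just alternal ones.)
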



The proof of Theorem~\ref{thmSNodotUSOmT} is given at the end of
this section.


Our definition \eqref{eqdefModotUest}--\eqref{eqdefModotUuom} of the mould operation~`$\odot$' is a variant of
\'{E}calle's mould composition~`$\circ$' which is defined for any
alphabet that is a commutative semigroup
(\cite{E84}, \cite{S09}, \cite{FFM}).
Here is a definition which encompasses both operations:

\begin{definition}
  Given two alphabets $\Om$ and~$\fN$, and a map
  $\sig \col \uOm\setminus\{\est\} \to \fN$, 
  we define the $\sig$-composition
\[ (M,U)\in\kk^\UN\times\kk^\uOm \mapsto M\circ_\sig U\in\kk^\uOm \]
by the formulas
\begin{align}
(M\circ_\sig U)^{\est} &\defeq M^{\est},  \\[1ex]
(M\circ_\sig U)^{\uom} &\defeq 
\sum_{s\geq 1} \, \sum_{
\substack{ \uom =\uom^1 \cdots \, \uom^s \\
\uom^1,\ldots,\, \uom^s \in \uOm\setminus\{\est\}
}}
M^{\sig(\uom^1) \cdots \sig(\uom^s)} U^{\uom^1} \cdots U^{\uom^s}
  \quad \text{for $\uom\in\uOm\setminus\{\est\}$.}
\end{align}
\end{definition}

Thus, we recover the `$\odot$' composition in the special case when
$\fN=\N^*$ and $\sigma(\uom)=r(\uom)$ (with arbitrary $\Om$),
and \'Ecalle's composition `$\circ$' when $\fN=\Om$ is a commutative
semigroup and $\sig(n_1\cdots n_r) = n_1+\cdots+n_r$
for any nonempty word of~$\UN$.
Some classical properties of the latter operation can be generalized as follows:
\begin{enumerate}[(i)]
\item \label{itemMtimesN}
$(M\circ_\sig U)\times(N\circ_\sig U)=(M\times N)\circ_\sig U$.
\item   \label{itemexplog}
$\ee^{M\circ_\sig U} = (\ee^M)\circ_\sig U$ if $M^\est=0$, \ens
$\log(M\circ_\sig U) = (\log M)\circ_\sig U$ if $M^\est=1$.
\item   \label{itemidm}
$I\circ_\sig U=U-U^{\est}\idm_\Om$, 
where $I$ is defined by~\eqref{eqdefI} and~$\idm_\Om$ is the unit of $\kk^\uOm$.
\item   \label{itemderivphipsi}
Denote by $\iota_\Om\col\Om\xhookrightarrow{}\uOm\setminus\{\est\}$ the
inclusion map.
If $\phi\col\fN\to\kk$ is a function such that
%
%
$\phi\circ\sig$ maps the concatenation in~$\uOm$ to the addition
in~$\kk$, then
\begla 
(\na_\phi M) \circ_\sig U  = \na_\psi (M \circ_\sig U)
\quad \text{for all $M\in \kk^\uN$},
\qquad \text{with $\psi \defeq \phi\circ\sig\circ\iota_\Om$,}
\edla
where $\na_\phi$ and $\na_\psi$ are the mould derivations
defined by~\eqref{eqdefnaphi}.
\item
If $U$ is alternal and
$\sig(\om_1\cdots\,\om_r) = \sig(\om_{\tau(1)}\cdots\om_{\tau(r)})$
for every permutation~$\tau$ and for any $\om_1,\ldots,\,\om_r\in\Om$, then
%
\[
\text{$M$ alternal} \IMP \text{$M\circ_\sig U$ alternal,} \qquad
\text{$M$ symmetral} \IMP \text{$M\circ_\sig U$ symmetral.}
\]
\item
Suppose $(B_\om)_{\om\in\Om}$ satisfies Assumption~\ref{assumpBn}.
Then the formula
$D_n \defeq \sum\limits_{\uom\in\sig\ii(n)} U^\uom B_\uom$
defines a family $(D_n)_{n\in\fN}$
which also satisfies Assumption~\ref{assumpBn}, and
\[
MD=(M\circ_\sig U)B
\quad\text{for any $M \in \kk^\UN$.}
\]
\item
  Suppose that $\tau\col\UN\setminus\{\est\}\to\fM$ is a map such that
  $\psi\defeq \tau\circ\iota_\fN\circ\sig$ satisfies
\begin{gather*}
\psi(\uom^1\cdots\,\uom^s)= \tau(\sig(\uom^1)\cdots\sig(\uom^s))
\ens\text{for any $s\ge1$ and $\uom^1,\ldots\,\uom^s\in\uOm\setminus\{\est\}$,}
\\[-1.5ex]
\intertext{then}
M\circ_{\psi}(N\circ_\sig U)=(M\circ_{\tau}N)\circ_\sig U
\quad\text{for any $M\in\kk^\UM, N\in \kk^\UN, U\in \kk^\uOm$.}
\end{gather*}
\end{enumerate}
(The proof of these properties is left to the reader.)


\begin{proof}[Proof of Theorem~\ref{thmSNodotUSOmT}]
Here $\Om = \{x,y\}$, $\fN=\N^*$ and $\sig = r\col \uOm \to\fN$ is
word length.
Since $T_\fN=\log S_\fN$ and $T_\Om = \log S_\Om$, in view
of~\eqref{itemexplog} it is sufficient to prove
$S_\fN\circ_\sig U = S_\Om$.

As noticed in Section~\ref{secAltMldExpPsi}, $S_\fN$ is a solution in
$\Q^\UN$ to equation~\eqref{eqmouldeq}, which involves $\na=\na_\phi$,
with the notation $\phi\col\fN\xhookrightarrow{}\Q$ for the inclusion
map.
Taking `$\odot U$' of both sides of~\eqref{eqmouldeq}, we get 
\beglab{eqnaphiSUIS}
(\na_\phi S_\fN) \circ_\sig U = (I\times S_\fN) \circ_\sig U.
\edla
We compute the \lhs\ by means of~\eqref{itemderivphipsi}:
$\phi\circ\sig(\uom) = r(\uom)$ is word length, in particular it maps
concatenation in~$\uOm$ to addition in~$\Q$, and
$\phi\circ\sig\circ\iota_\Om \equiv 1$,
hence the \lhs\ is $\na_1(S_\fN\circ_\sig U)$.
Note that the mould derivation~$\na_1$ is given by $(\na_1 M)^\uom =
r(\uom) M^\uom$.

By~\eqref{itemMtimesN} and~\eqref{itemidm}, the \rhs\
of~\eqref{eqnaphiSUIS} is
$(I\circ_\sig U)\times (S_\fN\circ_\sig U) = U \times (S_\fN\circ_\sig
U)$.
Therefore, $S_\fN\circ_\sig U$ is a solution to
\beglab{eqnaunMUM}
M^\est = 1, \qquad \na_1 M = U \times M.
\edla
It is easy to see that~\eqref{eqnaunMUM} has no other solution in~$\Q^\uOm$.

On the other hand, by~\eqref{eqMldExpSOm}, 
$S_\Om = \ee^{I_x} \times \ee^{I_y}$,
and $\na_1$ is a mould derivation which satisfies
$\na_1 I_x = I_x$ and $\na_1 I_y = I_y$,
thus
\begin{multline*}
\na_1 S_\Om = \na_1(\ee^{I_x}) \times \ee^{I_y} + \ee^{I_x} \times \na_1(\ee^{I_y})
= I_x \times \ee^{I_x} \times \ee^{I_y} + \ee^{I_x} \times I_y \times \ee^{I_y}\\[1ex]
= (I_x + \ee^{I_x} \times I_y \times \ee^{-I_x}) \times \ee^{I_x} \times \ee^{I_y}
= U \times S_\Om
\end{multline*}
by~\eqref{eqUAdeIx}.
Therefore $S_\Om$ is a solution to~\eqref{eqnaunMUM}, hence it must
coincide with $S_\fN\circ_\sig U$.
\end{proof}


\bigskip

\bigskip

\subsubsection*{Acknowledgments}

D.S.\ and Y.L.\ thank the Centro Di Ricerca Matematica Ennio De Giorgi and the Scuola Normale Superiore di Pisa for their kind hospitality, during which this work was completed.

\newpage



\begin{thebibliography}{GMN13}
\addtolength{\itemsep}{-1.2ex}


\bibitem[BS17]{BS} S. Baumard and L. Schneps, {\it On the derivation representation of the fundamental {L}ie
              algebra of mixed elliptic motives},
Annales Math\'ematiques du Qu\'ebec {\bf 41}, 1 (2017), 43--62.

\bibitem[BD16]{BDSSU} A. Behtash, G. V. Dunne, T. Sch\"{a}fer, T. Sulejmanpasic and M. \"{U}nsal, {\it Complexified path integrals, exact saddles, and supersymmetry}, Phys. Rev. Lett. {\bf 116} (2016),011601.

\bibitem[BF12]{BF}  A. Bonfiglioli and R. Fulci, {\it Topics in Noncommutative Algebra---The Theorem of Campbell, Baker, Hausdorff and Dynkin},  Lecture Notes in Mathematics, 2034. Springer, Heidelberg, (2012). xxii+539 pp.

\bibitem[BE17]{BE17} O. Bouillot and J. \'Ecalle, {\it Invariants of identity-tangent diffeomorphisms expanded as
              series of multitangents and multizetas},
in {\it Resurgence, physics and numbers}, 
%
109--232 (2017),
CRM Series, 20, Ed. Norm., Pisa.

\bibitem[CSV17]{SSV16} R. Couso-Santamar\'{\i}a, R. Schiappa and R.Vaz, {\it On asymptotices and resurgent structures of enumerative Gromov-Witten invariants}, Commun. Num. Theor. Phys. {\bf 11} (2017) 707--790,
[1605.07473].

\bibitem[CMS17]{CMS} R. Couso-Santamar\'{\i}a1, M. Mari\~{n}o and R. Schiappa, {\it Resurgence matches quantization},  J. Phys. A: Math. Theor. {\bf 50} (2017) 145402.



\bibitem[Dy47]{ED} E. B. Dynkin, {\it Calculation of the coefficients in the Campbell-Hausdorff formula} (Russian), Dokl. Akad. Nauk SSSR (N.S.), {\bf57}, (1947), 323--326.

\bibitem[DDP93]{DDP93} H. Dillinger, E. Delabaere and F. Pham, {\it
    R\'{e}surgence de Voros et p\'{e}riodes des courbes
    hyperelliptiques}, Annales de l'institut Fourier {\bf 43}, 1 (1993), 163--199.

\bibitem[Ec81]{E8185} J. \'{E}calle, {\it Les fonctions r{\'{e}}surgentes}, Publ. Math. d'Orsay [Vol.1: 81-05, Vol.2: 81-06,Vol.3: 85-05], (1981,1985).

\bibitem[Ec84]{E84} J. \'{E}calle, {\it Cinq applications des fonctions r\'{e}surgentes}, Publ. Math. d'Orsay, 84--62, (1984).

\bibitem[Ec92]{E2} J. \'{E}calle,  {\it Introduction aux fonctions analysables et preuve constructive de la conjecture de Dulac}, Actualit$\acute{\text{e}}$s Math., Hermann, Paris (1992).

\bibitem[Ec03]{EcaMZV} J. \'Ecalle, {\it A{RI}/{GARI}, la dimorphie et l'arithm\'etique des multiz\^etas:
              un premier bilan},
Journal de Th\'eorie des Nombres de Bordeaux {\bf 15}, 2 (2003), 411--478.

\bibitem[FFM17]{FFM} F. Fauvet, L. Foissy, and D. Manchon,
{\it The {H}opf algebra of finite topologies and mould composition},
Annales de l'Institut Fourier {\bf 67}, 3 (2017), 911--945.

\bibitem[GMN13]{GMN13} D. Gaiotto, G. W. Moore and A. Neitzke, {\it Wall-crossing, Hitchin Systems, and the WKB Approximation},  Adv. in Math. {\bf 234}, (2013), 239--403.

\bibitem[Ki17]{K17} T. Kimura, {\it Explicit description of the Zassenhaus formula},
Theor.Exp.Phys.,  (2017), 041A03.

\bibitem[Ko17]{Kon17} M. Kontsevich, {\it Resurgence and Quantization}, Course given at IHES, Paris in April, 2017.

\bibitem[Ma15]{M15} M. Matone, {\it An algorithm for the
Baker-Campbell-Hausdorff formula}, J. High Energy Phys. 05 (2015) 113,  [arXiv:1502.06589].

\bibitem[Me09]{M09} F. Menous,  {\it Formal differential equations and renormalization}, in {\it Renormalization and Galois theories}, A. Connes,
F. Fauvet, J.-P. Ramis (eds.), IRMA Lect.Math.Theor.Phys., {\bf 15} (2009), 229--246.

\bibitem[NP18]{LMP} J.-C.~Novelli, T.~Paul, D.~Sauzin, J.-Y.~Thibon,
``Rayleigh-Schr\"odinger series and Birkhoff decomposition'',
%
\emph{Letters in Mathematical Physics} \textbf{108} (2018), 18 p.
%
%
\\
\url{https://doi.org/10.1007/s11005-017-1040-1}

\bibitem[PS17]{PS16} T. Paul and D. Sauzin, {\it Normalization in Lie
    algebras via mould calculus and applications},
{Regular and Chaotic Dynamics} \textbf{22}, 6 (2017), 616--649.

\bibitem[Re93]{CR} C. Reutenauer,
%
\emph{Free {L}ie algebras},
{London Mathematical Society Monographs} \textbf{7},
{Clarendon Press, Oxford University Press, New York} {1993},
{xviii+269 pp.}

\bibitem[Sa08]{S08} D. Sauzin, {\it Initiation to mould calculus through the example of saddle-node singularities}, Rev. Semin. Iberoam. Mat. {\bf 3} (2008), no. 5-6, 147--160.

\bibitem[Sa09]{S09} D. Sauzin, {\it Mould expansions for the saddle-node and resurgence monomials}, in {\it Renormalization and Galois theories}, A. Connes, F. Fauvet, J.-P. Ramis (eds.), IRMA Lectures in Mathematics and Theoretical Physics 15, Z\"{u}rich: European Mathematical Society, 83--163 (2009).

\bibitem[Sa16]{S14} D. Sauzin, {\it Introduction to $1$-summability
    and resurgence},
in Divergent series, summability and resurgence. I. Monodromy and
resurgence, C.~Mitschi, D.~Sauzin. Lecture Notes in Mathematics, 2153. Springer, 2016. xxi+298 pp.

\bibitem[Sc12]{Schneps} L. Schneps, {\it Double shuffle and {K}ashiwara-{V}ergne {L}ie algebras},
Journal of Algebra {\bf 367} (2012), 54--74.


\bibitem[Th11]{JT} J.-Y. Thibon, {\it Noncommutative symmetric functions and combinatorial Hopf algebras}, in: Asymptotics in dynamics, geometry and PDEs; generalized Borel summation. Vol. I, (2011) 219--258, CRM Series, 12, Ed. Norm., Pisa.

\bibitem[Vo83]{V83} A. Voros, {\it The return of the quartic
    oscillator. The complex WKB method}, Annales de l'I. H. P.,
  section A, tome 39, $n^{\circ}$ 3 (1983) 211--338.

\bibitem[vW66]{vonW} W. von Waldenfels, ``Zur Charakterisierung
    Liescher Elemente in freien Algebren,''
\emph{Arch. Math. (Basel)} {\bf 17}, 44--48 (1966).


\end{thebibliography}
\end{document}